\newcommand{\bburl}[1]{\textcolor{blue}{\url{#1}}}
\newcommand{\monthyear}[1]{%
  \def\@monthyear{\uppercase{#1}}}
\newcommand{\volnumber}[1]{%
  \def\@volnumber{\uppercase{#1}}}
\def\ps@plain{\ps@empty
  \def\@oddfoot{\@monthyear \hfil \thepage}%
  \def\@evenfoot{\thepage \hfil \@volnumber}}
\def\ps@firstpage{\ps@plain}
\def\ps@headings{\ps@empty
  \def\@evenhead{%
    \setTrue{runhead}%
    \def\thanks{\protect\thanks@warning}%
    \uppercase{\ }\hfil}%
  \def\@oddhead{%
    \setTrue{runhead}%
    \def\thanks{\protect\thanks@warning}%
    \hfill\uppercase{Generalizing Zeckendorf's Theorem to A Non-constant Recurrence}}%
  \let\@mkboth\markboth
  \def\@evenfoot{%
    \thepage \hfil \@volnumber}%
  \def\@oddfoot{%
    \@monthyear \hfil \thepage}%
  }%
\theoremstyle{plain}
\numberwithin{equation}{section}
\newtheorem{thm}{Theorem}[section]
\newtheorem{theorem}[thm]{Theorem}
\newtheorem{lemma}[thm]{Lemma}
\newtheorem{corollary}[thm]{Corollary}
\newtheorem{example}[thm]{Example}
\newtheorem{definition}[thm]{Definition}
\newcommand{\ignore}[1]{}
\newcommand{\lp}{\left(}
\newcommand{\rp}{\right)}
\newcommand{\lf}{\lfloor}
\newcommand{\rf}{\rfloor}
\newcommand{\Lf}{\left\lfloor}
\newcommand{\Rf}{\right\rfloor}
\newcommand\be{\begin{eqnarray}}
\newcommand\ee{\end{eqnarray}}
\newcommand\bea{\begin{eqnarray}}
\newcommand\eea{\end{eqnarray}}
\newcommand\ben{\begin{enumerate}}
\newcommand\een{\end{enumerate}}
\newtheorem{conj}[thm]{Conjecture}
\newtheorem{lem}[thm]{Lemma}
\newtheorem{defi}[thm]{Definition}
\begin{document}

\monthyear{August 2020}
\volnumber{Volume, Number}
\setcounter{page}{1}
\title{Generalizing Zeckendorf's Theorem to a Non-constant Recurrence}

\author[E.~Bo\l dyriew]{El\.zbieta~Bo\l dyriew}
\email{\textcolor{blue}{\href{mailto:eboldyriew@colgate.edu}{eboldyriew@colgate.edu}}}
\address{Department of Mathematics, Colgate University, Hamilton, NY 13346}

\author[A.~Cusenza]{Anna~Cusenza}
\email{\textcolor{blue}{\href{ascusenza@g.ucla.edu}{ascusenza@g.ucla.edu}}}
\address{Department of Mathematics, University of California, Los Angeles, Los Angeles, CA, 90095}

\author[L.~Dai]{Linglong~Dai}
\email{\textcolor{blue}{\href{mailto:dail23@georgeschool.org}{dail23@georgeschool.org}}}
\address{George School, Newtown, PA 18940}

\author[P.~Ding]{Pei~Ding}
\email{\textcolor{blue}{\href{mailto:13764986079@163.com}{13764986079@163.com}}}
\address{Shanghai World Foreign Language School, Shanghai, China}

\author[A.~Dunkelberg]{Aidan~Dunkelberg}
\email{\textcolor{blue}{\href{awd4@williams.edu}{awd4@williams.edu}}}
\address{Department of Mathematics and Statistics, Williams College, Williamstown, MA 01267}

\author[J.~Haviland]{John~Haviland}
\email{\textcolor{blue}{\href{mailto:havijw@umich.edu}{havijw@umich.edu}}}
\address{Department of Mathematics, University of Michigan, Ann Arbor, MI 48109}

\author[K.~Huffman]{Kate~Huffman}
\email{\textcolor{blue}{\href{klhuffman@crimson.ua.edu}{klhuffman@crimson.ua.edu}}}
\address{Department of Mathematics,University of Alabama, Tuscaloosa, AL 35401}

\author[D.~Ke]{Dianhui~Ke}
\email{\textcolor{blue}{\href{kdianhui@umich.edu}{kdianhui@umich.edu}}}
\address{Department of Mathematics, University of Michigan, Ann Arbor, MI 48109}

\author[D.~Kleber]{Daniel~Kleber}
\email{\textcolor{blue}{\href{kleberd@carleton.edu}{kleberd@carleton.edu}}}
\address{Department of Mathematics and Statistics, Carleton College, Northfield, MN 55057}

\author[J.~Kuretski]{Jason Kuretski}
\email{\textcolor{blue}{\href{mailto:jkuretski@usf.edu}{jkuretski@usf.edu}}}
\address{Department of Mathematics and Statistics, University of South Florida, Tampa, FL 33620}

\author[J.~Lentfer]{John~Lentfer}
\email{\textcolor{blue}{\href{mailto:jlentfer@hmc.edu}{jlentfer@hmc.edu}}}
\address{Department of Mathematics, Harvey Mudd College, Claremont, CA 91711}

\author[T.~Luo]{Tianhao~Luo}
\email{\textcolor{blue}{\href{mailto:christopher.luo21@sailsburyschool.org}{christopher.luo21@sailsburyschool.org}}}
\address{Salisbury School, Salisbury, CT 06068}

\author[S.~J.~Miller]{Steven~J.~Miller}
\email{\textcolor{blue}{\href{mailto:sjm1@williams.edu}{sjm1@williams.edu}},  \textcolor{blue}{\href{Steven.Miller.MC.96@aya.yale.edu}{Steven.Miller.MC.96@aya.yale.edu}}}
\address{Department of Mathematics and Statistics, Williams College, Williamstown, MA 01267}

\author[C.~Mizgerd]{Clayton~Mizgerd}
\email{\textcolor{blue}{\href{cmm12@williams.edu}{cmm12@williams.edu}}}
\address{Department of Mathematics and Statistics, Williams College, Williamstown, MA 01267}

\author[V.~Tiwari]{Vashisth~Tiwari}
\email{\textcolor{blue}{\href{vtiwari2@u.rochester.edu}{vtiwari2@u.rochester.edu}}}
\address{Department of Mathematics, University of Rochester, Rochester, NY 14627}

\author[J.~Ye]{Jingkai~Ye}
\email{\textcolor{blue}{\href{yej@whitman.edu}{yej@whitman.edu}}}
\address{Department of Mathematics and Statistics, Whitman College, Walla Walla, WA, 99362}

\author[Y.~Zhang]{Yunhao~Zhang}
\email{\textcolor{blue}{\href{mailto:lucas.zhang.12138@gmail.com}{lucas.zhang.12138@gmail.com}}}
\address{Texas Academy of Mathematics and Science, Denton, TX 76203}

\author[X.~Zheng]{Xiaoyan Zheng}
\email{\textcolor{blue}{\href{zhengxiaoyan@wustl.edu}{zhengxiaoyan@wustl.edu}}}
\address{Department of Mathematics and Statistics, Washington University in St. Louis, St. Louis, MO 63130}

\author[W.~Zhu]{Weiduo~Zhu}
\email{\textcolor{blue}{\href{mailto:amandazwd@163.com}{amandazwd@163.com}}}
\address{The Experimental High School Attached to Beijing Normal University, Beijing, China
}

\title{EXTENDING ZECKENDORF'S THEOREM TO A NON-CONSTANT RECURRENCE and THE ZECKENDORF GAME ON THIS NON-CONSTANT RECURRENCE RELATION}

\date{\today}

\begin{abstract}
Zeckendorf's Theorem states that every positive integer can be uniquely represented as a sum of non-adjacent Fibonacci numbers, indexed from $1, 2, 3, 5,\ldots$. This has been generalized by many authors, in particular to constant coefficient fixed depth linear recurrences with positive (or in some cases non-negative) coefficients. In this work we extend this result to a recurrence with non-constant coefficients, $a_{n+1} = n a_{n} + a_{n-1}$. The decomposition law becomes every $m$ has a unique decomposition as $\sum s_i a_i$ with $s_i \le i$, where if $s_i = i$ then $s_{i-1} = 0$. Similar to Zeckendorf's original proof, we use the greedy algorithm. We show that almost all the gaps between summands, as $n$ approaches infinity, are of length zero, and give a heuristic that the distribution of the number of summands tends to a Gaussian.



Furthermore, we build a game based upon this recurrence relation, generalizing a game on the Fibonacci numbers. Given a fixed integer $n$ and an initial decomposition of $n= na_1$, the players alternate by using moves related to the recurrence relation, and whoever moves last wins. We show that the game is finite and ends at the unique decomposition of $n$, and that either player can win in a two-player game. We find the strategy to attain the shortest game possible, and the length of this shortest game. Then we show that in this generalized game when there are more than three players, no player has the winning strategy. Lastly, we demonstrate how one player in the two-player game can force the game to progress to their advantage.
\end{abstract}

\keywords{Zeckendorf decompositions, Recurrence relations with non-constant coefficients, Zeckendorf game on this Relation}

\date{\today}

\thanks{This work was supported by NSF Grants DMS1561945 and DMS1947438, the Eureka Program, the University of Michigan, and Williams College.}

\maketitle
\tableofcontents

\section{Introduction}

Zeckendorf \cite{Ze} showed that every positive integer can be uniquely represented as a sum of non-adjacent Fibonacci numbers. Many related results on Zeckendorf decompositions, including uniqueness, existence, and Gaussian distribution of the number of summands, have been proven; see for example \cite{Bes,Bow,Br,Day,Dem,FGNPT,Fr,GTNP,Ha,Ho,HW,Ke,Lek,LM1,LM2,MW1,MW2,Ste1,Ste2} and the references therein. Additionally, this theorem has been generalized to a class of homogeneous linear recurrences known as positive linear recurrence sequences (see for example \cite{KKMW}). Additionally, Baird-Smith, Epstein, Flint and Miller \cite{BEFM1, BEFM2} have created a game based on the Fibonacci numbers; we show that a similar game exists for this sequence, and prove some results about it.

In this paper, we aim to achieve similar results for a particular recurrence sequence which has non-constant recurrence coefficients. The sequence in question is given by $a_{n+1} = n a_{n} + a_{n - 1}$ with initial conditions $a_1 = 1$ and $a_2 = 2$. We are concerned with a particular kind of ``legal'' decomposition, defined analogously to legal decompositions for positive linear recurrence sequences.

\begin{definition}[Legal Decomposition]\label{DD}
	A legal decomposition is a sum of the form $\sum_{i = 1}^m s_i a_i$, where $s_i \in \{0, 1, \ldots, i\}$ and if $s_i = i$, then $s_{i - 1} = 0$.
\end{definition}

This definition ensures that we cannot use the recurrence relation to replace some terms of the decomposition. Our first result is that legal decompositions exist for all positive integers.

\begin{theorem}\label{existUnique}
	There exists a legal decomposition of every positive integer into terms of the sequence $\{a_n\}$.
\end{theorem}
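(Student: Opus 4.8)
The plan is to construct a legal decomposition explicitly via the greedy algorithm and to verify the two defining constraints of Definition~\ref{DD} by strong induction on the integer $N$ being decomposed. First I would record the elementary fact that the sequence $\{a_n\}$ is strictly increasing for $n \ge 1$ (immediate from $a_{n+1} = n a_n + a_{n-1}$ together with $a_1 = 1$ and $a_2 = 2$), so that for every positive integer $N$ there is a well-defined largest index $m$ with $a_m \le N$. Given such an $N$, the greedy step sets $s_m := \lfloor N/a_m \rfloor$ and passes to the remainder $r := N - s_m a_m$, which satisfies $0 \le r < a_m \le N$; since $r < N$ the recursion terminates.

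The two things to check at each step are that $s_m \le m$ and that the constraint ``$s_i = i \Rightarrow s_{i-1} = 0$'' holds throughout. Both follow from the recurrence $a_{m+1} = m a_m + a_{m-1}$ together with the maximality of $m$, which gives $N < a_{m+1} = m a_m + a_{m-1}$. Since the sequence is increasing we have $a_{m-1} < a_m$, hence $N < (m+1) a_m$ and therefore $s_m = \lfloor N/a_m\rfloor \le m$, so the coefficient is admissible. The delicate case is $s_m = m$: here $N \ge m a_m$, so the remainder obeys $r = N - m a_m < a_{m-1}$, which means that when the greedy algorithm continues on $r$ the largest index it can use is at most $m-2$. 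In particular the coefficient of $a_{m-1}$ in the decomposition of $r$ is forced to be zero, which is exactly the constraint required when $s_m = m$.

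I would then assemble these observations into a strong induction. Assuming every positive integer smaller than $N$ admits a legal decomposition, the decomposition of $r$ produced by the recursion is legal, and all constraints ``$s_i = i \Rightarrow s_{i-1} = 0$'' for $i \le m-1$ are inherited from it; the only new constraint, the one linking $s_m$ and $s_{m-1}$, was verified above. Combining $s_m a_m$ with the legal decomposition of $r$ (whose top index is at most $m-1$, or at most $m-2$ in the boundary case) therefore yields a legal decomposition of $N$, and the base cases $N = 1 = a_1$ and $N = 2 = a_2$ are immediate. The main obstacle, such as it is, lies entirely in the boundary case $s_m = m$: one must see that \emph{saturating} the coefficient at index $m$ forces the remainder below $a_{m-1}$, rather than merely below $a_m$, which is what automatically supplies the required vanishing of $s_{m-1}$. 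This is the one place where the specific shape of the recurrence, $m a_m = a_{m+1} - a_{m-1}$, is genuinely used; everything else is bookkeeping.
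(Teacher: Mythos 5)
Your proposal is correct and follows essentially the same route as the paper's proof: a greedy choice of the top coefficient $s_m = \lfloor N/a_m\rfloor$, strong induction on $N$, the bound $s_m \le m$ from $N < a_{m+1} = m a_m + a_{m-1} < (m+1)a_m$, and the key boundary observation that $s_m = m$ forces the remainder below $a_{m-1}$, which supplies the constraint $s_{m-1} = 0$. The only cosmetic difference is that the paper splits off the case $N = a_i$ and phrases the inductive hypothesis purely as existence (any legal decomposition of $r < a_{m-1}$ automatically avoids $a_{m-1}$), whereas you track the greedy recursion itself; both are sound.
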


This can be proved in a similar fashion to the corresponding result for positive linear recurrence sequences. We then establish an explicit method for computing these decompositions using the greedy algorithm. This provides another proof of uniqueness.

\begin{theorem}\label{greedy}
	If $a_n \leq x < a_{n + 1}$, then the coefficient of $a_n$ in the legal decomposition of $x$ is $\lf x / a_n \rf$.
\end{theorem}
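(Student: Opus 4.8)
The plan is to reduce the statement to a single sharp inequality: the largest value attainable by a legal decomposition that uses only the terms $a_1, \ldots, a_n$ is exactly $a_{n+1}-1$. Granting this maximality lemma, fix $x$ with $a_n \le x < a_{n+1}$ and take any legal decomposition $\sum_i s_i a_i$ of $x$ (one exists by Theorem \ref{existUnique}). First I would observe that $s_i = 0$ for every $i > n$: if $s_i \ge 1$ for some $i \ge n+1$, then the sum is at least $a_i \ge a_{n+1} > x$, a contradiction, so the decomposition lives on indices $\le n$. Writing $c = s_n$ for the coefficient of $a_n$, the remaining terms $x - c a_n = \sum_{i \le n-1} s_i a_i$ form a legal decomposition on indices $\le n-1$, so the lemma gives $0 \le x - c a_n \le a_n - 1$. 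This says precisely $c a_n \le x < (c+1)a_n$, i.e. $c = \lfloor x/a_n \rfloor$, which is the claim. (En route, the sequence being positive and strictly increasing gives $a_{n+1} = n a_n + a_{n-1} < (n+1)a_n$, so $1 \le \lfloor x/a_n\rfloor \le n$ and the forced coefficient is indeed a legal value.)

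The heart of the argument is therefore the maximality lemma, which I would prove by induction on $n$: every legal decomposition on indices $\le n$ satisfies $\sum_{i=1}^n s_i a_i \le a_{n+1}-1$, with base cases $n=0$ (empty sum, value $0 = a_1-1$) and $n=1$ (value $s_1 a_1 \le a_2-1$) immediate. For the inductive step the natural move is to split on the coefficient $s_n$. If $s_n \le n-1$, then the inductive hypothesis $\sum_{i\le n-1} s_i a_i \le a_n - 1$ gives $\sum_{i \le n} s_i a_i \le (n-1)a_n + (a_n-1) = na_n - 1 \le a_{n+1}-1$. If instead $s_n = n$, then legality forces $s_{n-1}=0$, so only indices $\le n-2$ contribute below $a_n$, and the inductive hypothesis at $n-2$ gives $\sum_{i\le n}s_i a_i \le na_n + (a_{n-1}-1) = a_{n+1}-1$.

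The step I expect to be the main obstacle is precisely this case split in the maximality lemma: it is the one place where the non-standard legality rule ``$s_i = i \Rightarrow s_{i-1}=0$'' must be invoked, and making the bound \emph{tight} (equal to $a_{n+1}-1$, not something looser) hinges on handling the saturated case $s_n = n$ correctly. One clean consistency check is that the extremal choice $s_n = n$, $s_{n-1}=0$, recursing extremally on indices $\le n-2$, has value exactly $a_{n+1}-1$, matching the bound. Finally, since the computation above pins down $c = \lfloor x/a_n\rfloor$ for \emph{every} legal decomposition of $x$, the same reasoning applied to $x - c a_n$ determines all lower coefficients by descent; note that when $c = n$ one automatically has $x - na_n < a_{n+1} - na_n = a_{n-1}$, which forces $s_{n-1}=0$ and so preserves legality through the recursion. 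This descent is what yields the promised alternative proof of uniqueness.
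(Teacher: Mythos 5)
Your proposal is correct and follows essentially the same route as the paper: your maximality lemma is exactly the paper's Lemma~\ref{largestDecomposable}, proved by the same induction with the same case split on $s_n \le n-1$ versus $s_n = n$, and the theorem is then deduced from that lemma. The only cosmetic difference is that the paper runs the deduction as a two-sided contradiction (the coefficient cannot be larger or smaller than $\lf x/a_n \rf$), whereas you sandwich $x - c a_n$ in $[0, a_n - 1]$ directly; these are the same argument.
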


Next, we move on to examine the gaps between each summand and prove that most gaps in the decompositions of integers in $[a_n, a_{n + 1})$ will be of length $0$ as $n \to \infty$.

\begin{theorem}\label{th:gaps}
	As $n \to \infty$, the proportion of gaps of non-zero length in the decomposition of $m \in [a_n, a_{n + 1})$ goes to $0$.
\end{theorem}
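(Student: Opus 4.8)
The plan is to reduce the statement to a comparison between two simple quantities attached to each decomposition: the number of \emph{distinct} indices that appear (which controls the number of nonzero gaps) and the total number of summands counted with multiplicity (which controls the total number of gaps). Writing the legal decomposition of $m$ as $\sum_{i=1}^n s_i a_i$, I regard its summands as the multiset of indices in which $i$ occurs $s_i$ times; sorting this multiset and taking consecutive differences, a gap of length $0$ occurs exactly between two equal consecutive indices. Hence the number of zero gaps is $\sum_{i}(s_i-1)\mathbf{1}[s_i\ge 1]=K(m)-d(m)$, where $K(m)=\sum_i s_i$ is the total number of summands and $d(m)=\#\{i:s_i\ge 1\}$ is the number of distinct indices used. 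The remaining gaps, one for each transition between distinct used indices, number $d(m)-1$ and all have positive length. Thus the total number of gaps is $K(m)-1$ and the number of nonzero gaps is exactly $d(m)-1$.

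First I would record the trivial bound $d(m)-1\le n-1$, valid for every $m\in[a_n,a_{n+1})$ since only the indices $1,\dots,n$ are available; summed over the block, the number of nonzero gaps is at most $(n-1)(a_{n+1}-a_n)$. The whole result then follows if the total number of summands over the block grows superlinearly in $n$ per element, i.e.\ if the average number of summands of $m\in[a_n,a_{n+1})$ is of order $n^2$. The heart of the argument is therefore a lower bound on $S_n:=\sum_{m=0}^{a_{n+1}-1}K(m)$. Splitting $[0,a_{n+1})$ according to the value of the leading coefficient $s_n=\lfloor m/a_n\rfloor$ and invoking Theorem~\ref{greedy}, each value $s_n=k$ with $0\le k\le n-1$ gives a full block $[ka_n,(k+1)a_n)$ on which the remaining coefficients run over the decomposition of $r=m-ka_n\in[0,a_n)$, while $s_n=n$ gives the short block $[na_n,a_{n+1})$ of length $a_{n-1}$ on which $r$ runs over $[0,a_{n-1})$. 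This self-similarity yields the recursion
\[ S_n \;=\; a_n\binom{n}{2} + n\,S_{n-1} + n\,a_{n-1} + S_{n-2}. \]

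Writing $\mu_n=S_n/a_{n+1}$ for the average number of summands and dividing by $a_{n+1}=na_n+a_{n-1}$, the dominant balance is $\mu_n=\mu_{n-1}+n/2+O(1)$, which telescopes to $\mu_n=\Theta(n^2)$. This matches the heuristic that a typical coefficient $s_i$ is roughly uniform on $\{0,1,\dots,i\}$, so averages about $i/2$, and $\sum_{i=1}^n s_i$ averages about $n^2/4$. Passing from $[0,a_{n+1})$ to $[a_n,a_{n+1})$ costs only $S_{n-1}=o(S_n)$, so the average number of summands over the block is still of order $n^2$; combining with the bound $d(m)-1\le n-1$ gives an aggregate proportion of nonzero gaps of order $(n-1)/(n^2/4)\to 0$.

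The main obstacle is the rigorous control of this recursion: I must upgrade the informal balance $\mu_n\approx\mu_{n-1}+n/2$ to a genuine lower bound $\mu_n\ge cn^2$, carefully verifying that the correction terms $n\,a_{n-1}$ and $S_{n-2}$, together with the multiplicative distortion from dividing by $na_n+a_{n-1}$ rather than $na_n$, are of lower order and do not erode the quadratic growth (here one uses $a_{n-1}/a_n\to 0$). A secondary point, if the conclusion is wanted for almost every individual $m$ rather than in aggregate, is to exclude a nonnegligible set of $m$ with atypically few summands; this requires a lower-tail estimate on $K(m)$, obtainable from the same self-similar decomposition by tracking the distribution, and not merely the mean, of the coefficients $s_i$.
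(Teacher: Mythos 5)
Your proposal is correct, and it takes a genuinely different route from the paper's. The paper likewise works in aggregate over $I(n)=[a_n,a_{n+1})$ and likewise starts from the fact that each decomposition has at most $n$ nonzero gaps (giving at most $n\cdot(n+1)!$ nonzero gaps in the block, via the crude count $a_{n+1}-a_n\le(n+1)!$), but instead of averaging, it lower-bounds the zero gaps using an explicit subfamily: decompositions $\sum_{i=16}^n t_ia_i$ with $t_i\in\{\lfloor i/4\rfloor+2,\dots,\lfloor 3i/4\rfloor+3\}$, of which there are at least $Cn!/2^n$, each containing many repeated summands. Your route replaces that construction with the exact self-similar recursion
\begin{equation*}
S_n \ =\ \binom{n}{2}a_n\ +\ nS_{n-1}\ +\ na_{n-1}\ +\ S_{n-2},
\end{equation*}
valid by Theorem~\ref{greedy} together with uniqueness, concluding that the mean number of summands on the block is $\Theta(n^2)$ and hence that the proportion of nonzero gaps is $O(1/n)$. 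Your approach buys a quantitative rate and the asymptotic mean of the number of summands (directly relevant to the paper's Gaussianity conjecture), and the obstacle you flag is routine: dropping the nonnegative terms and using $a_{n+1}\le(n+1)a_n$ gives $\mu_n\ge\frac{n(n-1)}{2(n+1)}+\frac{n}{n+1}\mu_{n-1}$, and induction immediately yields $\mu_n\ge n^2/8$ for all $n\ge 2$; also, your ``secondary point'' about individual $m$ is unnecessary, since the theorem concerns the aggregate proportion. More importantly, your averaging step is not merely an alternative but a repair: the paper asserts that each family member has at least $\prod_{i=16}^n(i/4)=Cn!/4^n$ zero gaps, but zero gaps within a single decomposition add across indices rather than multiply, so each member has only $\sum_{i=16}^n(t_i-1)\approx n^2/8$ of them (indeed no $m\in I(n)$ can have more than $\sum_{i\le n}i=O(n^2)$ gaps in total). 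With the corrected count, the family supplies only about $(n^2/8)\,Cn!/2^n$ zero gaps, which for large $n$ is far \emph{smaller} than the paper's upper bound $n\cdot(n+1)!$ on nonzero gaps, so the paper's comparison does not close as written; what is genuinely needed is control of typical numbers in $I(n)$, and that is exactly what your recursion provides.
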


Finally, we conjecture that the frequency of the number of summands in $[a_n, a_{n + 1})$ forms a Gaussian distribution as $n \to \infty$.

Our second set of results concerns the aforementioned game on the Fibonacci numbers, extended to this sequence. In \cite{BEFM1, BEFM2} the authors proved that in a two player game, if the input number $n$ is at least 2 then player two has a winning strategy, though the proof is non-constructive. They also proved upper and lower bounds on the length of all games, which differed by a logarithm (recent work \cite{LLMMSXZ} has removed that factor, and now the bounds are of the same order). A motivation to study this sequence was to see what results translate to this setting. In Sections \ref{sec:introgame} and \ref{sec:winninggame}, we introduce the game and state our results on the length of game and on multi-person generalizations.

\section{Proving Existence and Uniqueness of Legal Decompositions}
The goal of this section is to prove Theorem~\ref{existUnique}. We separate the proof into two parts: existence and uniqueness.

\subsection{Existence}
We prove existence by strong induction on $x$. For $x = 1$, there is the decomposition $1 \cdot a_1$. Now suppose that $x$ is a positive integer larger than 1 and that all positive integers smaller than $x$ have legal decompositions. If $x = a_i$ for some $i$, then $1 \cdot a_i$ is a legal decomposition of $x$. If $x$ is not a term of the sequence, then there exists a unique positive integer $n$ such that $a_n < x < a_{n + 1}$. Let $s_n = \lf x / a_n \rf$ and $b = x - s_n a_n$. We have
\begin{equation}
	b \ <\ x - \lp \frac{x}{a_n} - 1 \rp a_n \ =\ a_n \ <\ x,
\end{equation}
so $b$ has a legal decomposition by the inductive hypothesis. Moreover, this legal decomposition does not use $a_n$ because $a_n > b$. In other words, the decomposition takes the form
\begin{equation}
	b \ =\ \sum_{i = 1}^{n - 1} s_i a_i.
\end{equation}
By construction of $b$, we now have
\begin{equation}\label{legalDecompX}
	x \ =\ b + s_n a_n \ =\ \sum_{i = 1}^n s_i a_i.
\end{equation}
So, to finish our proof of existence, it suffices to show that $s_n \leq n$ and if $s_n = n$, then $s_{n - 1} = 0$. If $s_n > n$, then as $s_n = \lf x / a_n \rf$,
\begin{equation}
	x \ \geq\ s_n a_n \ \geq\ (n + 1) a_n \ =\ na_n + a_n \ >\ na_n + a_{n - 1} \ =\ a_{n + 1}.
\end{equation}
But by construction, we have $x < a_{n + 1}$, so $s_n \leq n$. Finally, if $s_n = n$, then
\begin{equation}
	b \ =\ x - s_n a_n \ =\ x - n a_n \ <\ a_{n + 1} - n a_n \ =\ n a_n + a_{n - 1} - n a_n \ =\ a_{n - 1},
\end{equation}
so the decomposition of $b$ cannot include $a_{n - 1}$ and $s_{n - 1} = 0$, as desired. Thus, \eqref{legalDecompX} is a legal decomposition of $x$, completing our proof.

\subsection{Uniqueness}
Before proving uniqueness, we first determine the largest integer which can be decomposed using the terms $a_1, \ldots, a_n$.

\begin{lemma}\label{largestDecomposable}
	The largest positive integer which can be legally decomposed by the terms $a_1, \ldots, a_n$ is $a_{n + 1} - 1$.
\end{lemma}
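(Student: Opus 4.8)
The plan is to prove the statement by strong induction on $n$, establishing simultaneously that every legal decomposition using $a_1,\dots,a_n$ has value at most $a_{n+1}-1$ and that $a_{n+1}-1$ is actually attained. Write $M_n$ for the largest integer legally decomposable by $a_1,\dots,a_n$; the goal is then $M_n=a_{n+1}-1$. The base cases $n=1$ and $n=2$ are immediate: with only $a_1=1$ the maximum is $M_1=1=a_2-1$, while with $a_1,a_2$ the requirement that $s_2=2$ forces $s_1=0$ gives $M_2=2a_2=4=a_3-1$. For the inductive step with $n\ge 3$ I assume $M_{n-1}=a_n-1$ and $M_{n-2}=a_{n-1}-1$, and fix an arbitrary legal decomposition $\sum_{i=1}^n s_i a_i$.

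The upper bound comes from splitting on the top coefficient $s_n$. If $s_n\le n-1$, then the remaining coefficients form a legal decomposition using $a_1,\dots,a_{n-1}$, so $\sum_{i=1}^{n-1}s_i a_i\le M_{n-1}=a_n-1$; hence the total is at most $(n-1)a_n+(a_n-1)=na_n-1$, which is strictly less than $a_{n+1}-1=na_n+a_{n-1}-1$ since $a_{n-1}\ge 1$. If instead $s_n=n$, the legality condition forces $s_{n-1}=0$, so the remaining sum is $\sum_{i=1}^{n-2}s_i a_i$, a legal decomposition using only $a_1,\dots,a_{n-2}$; this is at most $M_{n-2}=a_{n-1}-1$, and the total is at most $na_n+(a_{n-1}-1)=a_{n+1}-1$ by the defining recurrence $a_{n+1}=na_n+a_{n-1}$. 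In either case the value is at most $a_{n+1}-1$.

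For achievability I construct a legal decomposition equal to $a_{n+1}-1$: set $s_n=n$ and $s_{n-1}=0$, and append the decomposition of $a_{n-1}-1$ that realizes $M_{n-2}$ using $a_1,\dots,a_{n-2}$, which exists by the inductive hypothesis. Since $s_n=n$ is paired with $s_{n-1}=0$, the combined sum is legal, and its value is $na_n+(a_{n-1}-1)=a_{n+1}-1$. The main obstacle is the $s_n=n$ case of the upper bound: one must invoke the adjacency constraint to observe that choosing $s_n=n$ kills the $a_{n-1}$ term, so the correct bound on the remainder is $M_{n-2}=a_{n-1}-1$ rather than $M_{n-1}=a_n-1$. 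Using the latter would give $(n+1)a_n-1$, which overshoots $a_{n+1}-1$ because $a_n>a_{n-1}$; it is precisely the forced vanishing of $s_{n-1}$, together with the recurrence, that makes the two cases close up tightly at $a_{n+1}-1$.
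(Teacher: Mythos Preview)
Your proof is correct and follows essentially the same strong-induction approach as the paper: split on whether $s_n<n$ or $s_n=n$, apply the inductive hypothesis to the tail (using the adjacency rule in the second case), and compare to $a_{n+1}$ via the recurrence. The one difference is that you also prove achievability of $a_{n+1}-1$ explicitly, whereas the paper only argues the upper bound $x<a_{n+1}$; in the paper, achievability is implicit from the existence result in the preceding subsection, which shows every integer below $a_{n+1}$ has a legal decomposition using $a_1,\dots,a_n$.
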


\begin{proof}
    We prove by strong induction on $n$ that if $x = \sum_{i = 1}^n s_i a_i$ is a legal decomposition, then $x < a_{n + 1}$. For $n = 1$, the only legal decomposition is $1 \cdot a_1 = 1 < 2 = a_2$, so the base case holds. Now assume the lemma holds for all $n' < n$, and let $x = \sum_{i = 1}^n s_i a_i$ be a legal decomposition. If $s_n < n$, then $x' = \sum_{i = 1}^{n - 1} s_i a_i$ is also a legal decomposition, so by the inductive hypothesis, $x' < a_n$. Thus,
    \begin{equation}
        x \ =\ \sum_{i = 1}^n s_i a_i \ <\ s_n a_n + a_n \ \leq\ (n - 1) a_n + a_n \ =\ n a_n \ <\ a_{n + 1}.
    \end{equation}
    If $s_n = n$, then $s_{n - 1} = 0$ by the definition of a legal decomposition, so $x'' = \sum_{i = 1}^{n - 2} s_i a_i$ is a legal decomposition. By the inductive hypothesis, this implies that $x'' < a_{n - 1}$, so
    \begin{equation}
        x \ =\ \sum_{i = 1}^n s_i a_i \ <\ s_n a_n + a_{n - 1} \ =\ n a_n + a_{n - 1} \ =\ a_{n + 1}.
    \end{equation}
    In either case, we see that $x < a_{n + 1}$, so the induction is complete.
\end{proof}

Now, we can prove uniqueness by showing that if two legal decompositions have the same sum, then they are the same decomposition. Assume for contradiction that there are two distinct legal decompositions with the same sum:
\begin{equation}\label{decompsWithSameSum}
	\sum_{i = 1}^n s_i a_i \ =\ \sum_{j = 1}^m t_j a_j.
\end{equation}
If $n \neq m$, then without loss of generality we may assume that $n < m$. Then by Lemma~\ref{largestDecomposable},
\begin{equation}
	\sum_{i = 1}^n s_i a_i \ <\ a_{n + 1} \ \leq\ a_m \ \leq\ \sum_{j = 1}^m t_j a_i,
\end{equation}
contradicting \eqref{decompsWithSameSum}. Thus, $n = m$, and we will use only $n$ for the remainder of the proof, as well as indexing with $i$.

Now, we want to show that $s_i = t_i$ for $i = 1, \ldots, n$. Define
\begin{equation}
	s'_i \ =\ s_i - \min(s_i, t_i) \qquad \text{and} \qquad t'_i \ =\ t_i - \min(s_i, t_i).
\end{equation}
for $i = 1, \ldots, n$. Then for each $i$, we have subtracted the same number copies of $a_i$ from both decompositions, so
\begin{equation}\label{newCoeffEqualSums}
	\sum_{i = 1}^n s'_i a_i \ =\ \sum_{i = 1}^n t'_i a_i.
\end{equation}
Additionally, for each $i$, at least one of $s'_i$ and $t'_i$ is zero because either $s_i$ or $t_i$ has been subtracted from itself in the construction of $s'_i$ and $t'_i$. If $s'_{i_1} \neq 0$ and $t'_{i_2} \neq 0$ for some maximal such $i_1$ and $i_2$, then by the same argument as above, $i_1 = i_2$. But this means that neither $s'_{i_1}$ nor $t'_{i_2}$ are zero, a contradiction. Thus, either $s_i = 0$ for all $i$, or $t_i = 0$ for all $i$. In either case, the sums in \eqref{newCoeffEqualSums} must be equal to zero, implying that $s'_i = t'_i = 0$ for $i = 1, \ldots, n$ because $a_i > 0$ and $s'_i, t'_i \geq 0$. By the construction of $s'_i$ and $t'_i$, this implies that $s_i = t_i$ for $i = 1, \ldots, n$, as desired.\qed

\section{Computing Legal Decompositions with the Greedy Algorithm}
We now can prove Theorem~\ref{greedy}, using the greedy algorithm to compute legal decompositions. Using Lemma~\ref{largestDecomposable}, the desired result follows quickly by considering what happens if the largest coefficient in the decomposition is either too large or too small.

\begin{proof}[Proof of Theorem~\ref{greedy}]
	Let $s_i$ be the number of copies of $a_i$ in the decomposition of $x$. If $s_i > \Lf x / a_i \Rf$, then
	\begin{equation}
		s_i a_i \ \geq\ \lp \Lf \frac{x}{a_i} \Rf + 1 \rp a_i \ >\ \frac{x}{a_i} a_i \ =\ x,
	\end{equation}
	which is a contradiction because $s_i a_i$ is part of the decomposition of $x$. If $s_i < \Lf x / a_i \Rf$, then
	\begin{equation}
		x - s_i a_i \ \geq\ x - \lp \Lf \frac{x}{a_i} \Rf - 1 \rp a_i \ \geq\ x - \lp \frac{x}{a_i} - 1 \rp a_i \ =\ a_i.
	\end{equation}
	Applying Lemma~\ref{largestDecomposable}, this implies that there is no legal decomposition of $x - s_i a_i$ using only the terms $a_1, \ldots, a_{i - 1}$. But one must exist since this decomposition forms the rest of the decomposition of $x$. As $s_i$ is neither greater nor less than $\Lf x / a_i \Rf$, we conclude that $s_i = \Lf x / a_i \Rf$.
\end{proof}

By iterating this theorem, we can not only compute the coefficient of the largest term in the decomposition of a given integer, but all coefficients. To do this, we find the coefficient of the largest term, then the coefficient of the second largest term, and so on by repeatedly applying Theorem~\ref{greedy} and then subtracting the newly found part of the decomposition.

\begin{example}
    We apply this process to find the legal decomposition of $x = 33$.

    The first five terms of the sequence are $1, 2, 5, 17, 73$. So the largest term in the decomposition of $33$ will be $a_4 = 17$. By repeatedly computing the coefficient of the largest term smaller than $x$, then updating $x$, we can compute all the coefficients:
    \begin{alignat*}{2}
        \Lf \frac{33}{17} \Rf \ =\ &\textcolor{red}{1} &&\quad\to\quad x \ =\ 33 - 1 \cdot 17 \ =\ 16\\
        \Lf \frac{16}{5} \Rf \ =\ &\textcolor{red}{3} &&\quad\to\quad x \ =\ 16 - 3 \cdot 5 \ =\ 1\\
        \Lf \frac{1}{1} \Rf \ =\ &\textcolor{red}{1} &&\quad\to\quad x \ =\ 2 - 1 \cdot 2 \ =\ 0.
    \end{alignat*}
    Note that we skipped $a_2 = 2$ because it was never the largest term smaller than $x$. This corresponds to the fact that the coefficient of $a_3$ is $3$, so the coefficient of $a_2$ must be zero by the decomposition rule. In all, we have the decomposition
    \begin{equation}
        33 \ =\ 1 \cdot a_4 + 3 \cdot a_3 + 1 \cdot a_1.
    \end{equation}
\end{example}

\section{The Distribution of Gaps between Summands}

The distribution of gaps has previously been studied in the context of generalized Zeckendorf decompositions by many authors, see for example \cite{BBGILMT, Bow, LM2}. \emph{Gaps} are differences in indices between each pair of adjacent summands, including identical ones, in the decomposition. Two identical summands constitute a \emph{gap of length zero,} and the gaps between distinct summands are \emph{non-zero gaps.} Additionally, the length of non-zero gaps depends on the difference of indices between the two adjacent distinct summands.

As $n$ grows, almost all of the gaps are zero. This is due to how rapidly our sequence grows; most indices are used multiple times (on average index $i$ occurs about $i/2$ times in a typical decomposition, with fluctuations on the order of $\sqrt{i}$, leading to a large number of gaps of length zero). The only way to get a gap of length 1 or more is from distinct summands, and there are at most $n$ such opportunities. We now prove Theorem~\ref{th:gaps}.

\begin{proof}[Proof of Theorem~\ref{th:gaps}]
We set $I(n):= [a_n, a_{n+1})$ to be the interval we are studying, and $A_n := a_{n+1}-a_n$ the number of terms of our sequence in that interval.
First, we show that $A_n \le (n+1)!$. To see this, note all numbers in $I(n)$ are of the form $\sum_{i=1}^n s_i a_i$, where $s_n \ge 1$, $s_i \in \{0, 1, \dots, i\}$ and if $s_i = i$ then $s_{i-1} = 0$. If we drop the last condition we obtain an upper bound; for each $i$ there are now $i+1$ choices, and thus $A_n \le (n+1)!$.

Next, for each $m \in I(n)$ there can be at most $n$ non-zero gaps. Thus the number of non-zero gaps arising from decompositions of numbers in $I(n)$ is at most $n \cdot (n+1)!$.

We now show that there are tremendously more gaps of length zero. As we are considering behavior in the limit, we may consider the subset of numbers of the form $\sum_{i=16}^n t_i a_i$, where $t_i \in \{\lfloor i/4\rfloor + 2, \dots, \lfloor 3i/4\rfloor + 3\}$ (we choose 16 to avoid any edge effects; we do not want for example the upper bound to exceed $n$). Note we have at least $i/2 + 1$ choices for each $i$, each $t_i \geq \lfloor i/4\rfloor + 2$ so each choice generates at least $i/4$ gaps of length zero, and all of these are legal decompositions of integers in $I(n)$ as no $t_i = i$. The number of such numbers is at least $\prod_{i=16}^n i/2$, which is $C n!/2^n$ for some fixed $C$. As each of these numbers generates at least $\prod_{i=16}^n (i/4) = C n!/4^n$ gaps of length zero, we see the total number of gaps of length zero is at least $C^2 n!^2 / 8^n$. This is tremendously larger than the number of non-zero gaps, as $n!/8^n \ge (n/8e)^n \ge n^3$, which implies that $C^2 n!^2 / 8^n > n \cdot (n+1)!$, for large enough $n$. Thus in the limit almost all gaps are of length zero.

\end{proof}

\section{Gaussianity}

Many researchers \cite{BBGILMT, Bow} have studied the distribution of the number of summands and the gaps between summands in generalized Zeckendorf decompositions. In positive linear recurrence systems, as well as some other systems, the answers have been found to be a Gaussian and geometric decay. One of the reasons we chose to study this non-constant coefficient recurrence was to see if these behaviors persist.

We conjecture that the distribution of the number of summands for $m \in [a_n, a_{n+1})$ converges to a Gaussian as $n \to \infty$. We hope to return to this in future work, though numerical studies strongly support this, as do results for a similar system. In particular, if we drop the assumption that if we have $i$ copies of $a_i$ then we must have 0 copies of $a_{i-1}$, Gaussianity follows immediately from Lindeberg's Central Limit Theorem (see \cite{Li, Za}).


\section{Introduction to the Generalized Zeckendorf Game}\label{sec:introgame}
Zeckendorf proved that every positive integer $n$ can be written uniquely as the sum of non-adjacent Fibonacci numbers, now known as the Zeckendorf decomposition of $n$. Baird-Smith, Epstein, Flint and Miller \cite{BEFM1,BEFM2} create a game based on the Zeckendorf decomposition. Zeckendorf's theorem has been generalized to the non-constant recurrence relation $a_{i+1}  = i\:a_{i}\:+\:a_{i-1}$ in Theorem \ref{existUnique}, allowing a game to be based on this recurrence.

We introduce some notation. By $\{1^n\}$ or $\{a_1^n\}$ we mean $n$ copies of $1$, the first number in the sequence. If we have 3 copies of $a_1$, 2 copies of $a_2$, and 7 copies of $a_4$, we could write either $\{a_1^3 \wedge a_2^2 \wedge a_4^7 \}$ or $\{1^3 \wedge 2^2 \wedge 17^7\}$.

\subsection{Definition of the Game}
\begin{defi}
 Let $a_1 = 1$, $a_2 = 2$, and $a_{i+1} = i\:a_{i}\:+\:a_{i-1}$. At the beginning of the game, there is an unordered list of $n$ $1$'s. We denote the initial list as $\{a_1^n\}$ where $n\in\mathbb{N}=\{1,2,3,\dots\}$. On each turn, a player can do one of the following moves which are based on the recurrence:
\begin{enumerate}
    \item  Combining moves:
    \begin{enumerate}
        \item If the list contains consecutive terms $a_i$ and $a_{i-1}$ such that there are at least $i$ $a_i$'s and one $a_{i-1}$, we can combine these to create $a_{i+1}$. This move is denoted by $\{a_{i}^ i\wedge a_{i-1}\rightarrow a_{i+1}\}$.
        \item If the list contains two $1$'s, we can combine $1$'s. This move is denoted by $\{1^2 \rightarrow 2\}$.
    \end{enumerate}
     \item Splitting moves:
    \begin{enumerate}
        \item
        Note that
        \begin{align*}
           (i+1)a_{i} &= i\:a_{i}\:+\:a_{i} \\
           &= i\:a_{i}\:+\:(i-1)a_{i-1}+a_{i-2}\\
           &= a_{i+1}+(i-2)a_{i-1}+a_{i-2}.\\
        \end{align*}
        Thus if the list contains $(i+1)$
        $a_{i}$'s, we can we can perform a splitting move in the following manner:
        $\{a_{i}^{i+1}\rightarrow a_{i+1}\wedge a_{i-1}^{i-2}\wedge a_{i-2}\}$.
        \item If the list contains three $2$'s, we can perform a splitting move denoted by $\{2^3 \rightarrow 1\wedge5\}$.
    \end{enumerate}
\end{enumerate}
The players alternate moving until no moves remain.
\end{defi}

 The game can have any number of players, $p$, for $p\in \mathbb{N}$.
 We will show that this game is finite and ends when the list is exactly the unique legal decomposition of $n$ ($n = \sum s_i a_i$, $0\leq s_i\leq i$), as at this point there are no possible moves left to be made. The player who makes the last move wins the game.

 Figure \ref{n=10 general game} shows a two-player sample game tree for $n=10$.
\begin{figure}[h]
    \centering
    \includegraphics[scale=0.5]{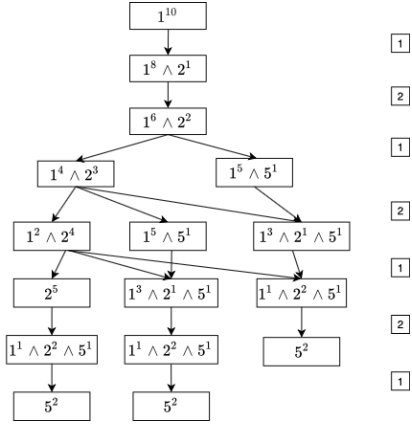}
    \caption{Game tree for $n=10$, showing all possible moves and where the game ends for each set of moves. Note that the game ends at the unique decomposition of $10$ which is given by $\{5^2\}$ (two copies of $5$).}
    \label{n=10 general game}
\end{figure}

\subsection{Properties of the Game}

\begin{thm}
\textbf{The game is finite:} Every game played on $n$ terminates within a finite number of moves at the unique decomposition given by $n = \sum s_i a_i$, $0\leq s_i\leq i$, where $a_i$ is the $i^{\text{th}}$ term in the sequence defined by $a_{i}  = (i-1)\:a_{i-1}\:+\:a_{i-2}$.
\end{thm}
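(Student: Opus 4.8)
The plan is to combine an invariant with a strictly decreasing monovariant and a characterization of the terminal positions. The natural invariant is the \emph{value} $\sum_i s_i a_i$ of a position, where $s_i$ denotes the number of copies of $a_i$ currently in the list. Every move is by construction an instance of the recurrence (for example, a combining move (1a) replaces $i$ copies of $a_i$ and one copy of $a_{i-1}$ by a single $a_{i+1}$, and $i\,a_i + a_{i-1} = a_{i+1}$; likewise the splitting identities are exactly those displayed in the definition of the game). Hence the value is preserved by every move. Since the game starts from $\{a_1^n\}$, the value is identically $n$ throughout.

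For the monovariant I would use the total number of summands (tokens) $T = \sum_i s_i$, and first check that \emph{every} legal move strictly decreases $T$. The combining moves clearly do: move (1a) sends $i+1$ tokens to $1$, and move (1b) sends $2$ tokens to $1$. The point requiring care is that the splitting moves also decrease $T$, in contrast to the classical Fibonacci game where a split leaves the token count unchanged. Here move (2a) replaces $i+1$ copies of $a_i$ by $1 + (i-2) + 1 = i$ tokens, a net decrease of one, and move (2b) sends $3$ tokens to $2$. This is precisely where the non-constant coefficient works in our favor: a split now requires $i+1$ copies of $a_i$, so the bookkeeping always yields a strict decrease. Because $T$ is a positive integer (the value $n \geq 1$ forces at least one token), equal to $n$ at the start and strictly decreasing under every move, the game must terminate after at most $n-1$ moves, which establishes finiteness.

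It then remains to identify the terminal positions. I would show that no move is available precisely when $s_i \le i$ for every $i$ and additionally $s_i = i$ forces $s_{i-1} = 0$: the unavailability of the splitting moves (2a) and (2b) together with (1b) gives $s_i \le i$ for all indices (treating the small cases $i = 1, 2$ via (1b) and (2b) and the general case $i \ge 3$ via (2a)), while the unavailability of the combining move (1a) gives the adjacency condition. This is exactly Definition~\ref{DD}, so a position admits no move if and only if it is a legal decomposition. Combining the three facts — the value is always $n$, a terminal position is a legal decomposition, and legal decompositions are unique by Theorem~\ref{existUnique} — the game necessarily halts at the unique legal decomposition of $n$.

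The main obstacle is the verification in the second step that the splitting moves strictly decrease the token count; this is the structural feature distinguishing this recurrence from the Fibonacci setting, and it is what makes $T$ itself (rather than some weighted potential) a valid monovariant. The secondary delicate point is the exhaustive case analysis at the terminal-position step, where one must confirm that the four move types jointly forbid exactly the configurations violating the legal-decomposition rules, with particular attention to the small indices handled by the special-case moves (1b) and (2b).
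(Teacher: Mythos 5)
Your proposal is correct and takes essentially the same approach as the paper: both arguments use the total number of terms as a strictly decreasing monovariant (noting in particular that a splitting move drops the count from $i+1$ to $i$), characterize the terminal positions as exactly the legal decompositions, and invoke uniqueness to conclude the game ends at the decomposition of $n$. Your only real addition is making the value-conservation invariant $\sum_i s_i a_i = n$ explicit, which the paper leaves implicit in the remark that the game progresses along partitions of $n$.
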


\begin{proof}
Consider the number of terms in the game. We show that this number is a strictly decreasing monovariant.\\
Our moves cause the following changes in the proposed monovariant. We observe that we only have to consider the terms affected by each move because the suggested monovariant is a sum, so unaffected terms contribute the same before and after the move. Here, $i$ is the index of $a_i$, a term in the current game state.
\begin{enumerate}
    \item Combining $1$'s: The move is characterized by $\{1^2 \rightarrow 2\}$. Thus we go from having  $2$ terms to $1$ term.
    \item Combining consecutive terms: This move is characterized by $\{a_{i}^ i\wedge a_{i-1}\rightarrow a_{i+1}\}$. Thus, the number of terms goes from $i+1$ terms to $1$ term.
    \item  Splitting moves: The splitting moves are given by $\{2^3 \rightarrow 1\wedge5\}$ and $\{a_{i}^{i+1}\rightarrow a_{i+1}\wedge a_{i-1}^{i-2}\wedge a_{i-2}\}$ respectively. Note that for all $i$, splitting moves cause the number of terms to go from $i+1$ terms to $i$ terms.
\end{enumerate}

We see that every move decreases the number of terms in the game at any state.
The game progresses along a subset of the partitions of $n$ and must end at the legal decomposition of $n$, for if it did not, there would still be terms $a_i$ such that we have $(i+1)$ of them, or the recurrence would apply, by definition. Hence there would still be a combining or splitting move possible. From this we know we must start with $n$ terms and end with $LZ(n)$ terms, where $LZ(n)$ is the number of terms in the legal decomposition of $n$. Therefore, since each move decreases the number of terms by at least $1$, the game can take at most $n-LZ(n)$ moves to complete, thus is finite.
\end{proof}

Now that we know that this game does indeed end in finitely many moves, this leads us to wonder how many moves must be played to finish the game. But first, we address whether it is possible for either player in a two-player game to win.

\begin{thm}
\textbf{The game can be won by either player in a two-player game:} For $n\geq6$, there are at least two games with different numbers of moves, where at least one game has an odd number of moves and one has an even number of moves.
\end{thm}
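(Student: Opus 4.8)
The plan is to construct, for every $n \ge 6$, two complete games starting from $\{a_1^n\}$ whose numbers of moves differ by exactly one. Since the player moving last wins, a game of odd length is won by player one and a game of even length by player two, so exhibiting one game of each parity proves the claim. The whole argument rests on a single local \emph{branch}: from a suitable intermediate position one can manufacture a $5$ either in one move, via the combining rule $\{2^2 \wedge 1 \rightarrow 5\}$, or in two moves, via $\{1^2 \rightarrow 2\}$ followed by the split $\{2^3 \rightarrow 1 \wedge 5\}$, with both branches landing in the \emph{same} resulting position.

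First I would drive the game from $\{a_1^n\}$ to the common position $\{2^2 \wedge 1^{\,n-4}\}$ by applying $\{1^2 \rightarrow 2\}$ twice. I then rewrite this as $\{2^2 \wedge 1^2 \wedge 1^{\,n-6}\}$, which is legitimate precisely because $n \ge 6$ leaves at least two spare $1$'s available at the branch point; this is exactly where the hypothesis $n \ge 6$ is used. At this position I branch. In Option A I apply the single combining move $\{2^2 \wedge 1 \rightarrow 5\}$, reaching $\{5 \wedge 1^{\,n-5}\}$. In Option B I instead apply $\{1^2 \rightarrow 2\}$ to obtain $\{2^3 \wedge 1^{\,n-6}\}$ and then the splitting move $\{2^3 \rightarrow 1 \wedge 5\}$, which also reaches $\{5 \wedge 1^{\,n-5}\}$, but in two moves rather than one.

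Because both branches terminate in the identical position $\{5 \wedge 1^{\,n-5}\}$, I can finish both games with one and the same sequence of legal moves; such a completion exists, is finite, and ends at the unique decomposition of $n$ by the preceding finiteness theorem. The two resulting games thus share a common prefix and a common suffix and differ only in the middle segment (one move versus two), so their total lengths differ by exactly $1$. Consequently one game has an even number of moves and the other an odd number, which is precisely the statement.

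The remaining work is routine bookkeeping: checking that each invoked move is legal at the position where it is used, in particular the $i=2$ instances of the general combining rule $\{a_i^i \wedge a_{i-1} \rightarrow a_{i+1}\}$ and of the splitting rule, together with the arithmetic identities $2\cdot 2 + 1 = 5$ and $3\cdot 2 = 5 + 1$ that justify them, and confirming that the two branches genuinely reconverge to the same multiset $\{5 \wedge 1^{\,n-5}\}$. The main obstacle, and the reason the threshold is $n \ge 6$ rather than smaller, is guaranteeing the \emph{simultaneous} presence of two $2$'s and two $1$'s at the branch point; for $n < 6$ this construction is unavailable and the small cases would need to be examined separately.
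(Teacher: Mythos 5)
Your proposal is correct and takes essentially the same route as the paper: both exhibit a three-move opening (two $\{1^2 \rightarrow 2\}$ moves followed by $\{2^2 \wedge 1 \rightarrow 5\}$) and a four-move opening (three $\{1^2 \rightarrow 2\}$ moves followed by the split $\{2^3 \rightarrow 1 \wedge 5\}$) that reach the identical state $\{5 \wedge 1^{n-5}\}$, then append one common completion so that the two total game lengths differ by one and hence have opposite parities. The paper presents these as two alternative opening sequences $M_1$ and $M_2$ rather than as a shared two-move prefix with a branch, but the games constructed, the use of $n \geq 6$, and the parity argument are the same.
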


\begin{proof}
We show using the game on $n=6$ that the game on $n\geq6$ can end in either an even or an odd number of moves, indicating that either player can win the game.

Let $n\geq 6$ and let the game begin with either of the following sequences of moves to first decompose $6$:
\begin{enumerate}
    \item $M_1 =\{\{1\wedge 1 \rightarrow 2\},\{1\wedge1\rightarrow2\},\{1 \wedge2 \wedge 2 \rightarrow5\}\}$ {(3 steps, $|M_1|=3$)},
    \item $M_2 =\{\{1\wedge 1 \rightarrow 2\},\{1\wedge1\rightarrow2\},\{1 \wedge1 \rightarrow2\},\{2\wedge2\wedge2 \rightarrow 5\wedge1\}\}$ {(4 steps, $|M_2|=4$)}.
\end{enumerate}
Now, let the set of moves it takes to resolve the rest of the game be $M_k$ with $|M_k| = k$. Regardless of what $k$ is, there are two sets of moves with different parities, $M_1\wedge M_k$ and $M_2\wedge M_k$, that describe a complete game.\\
For $k$ odd, $|M_1\wedge M_k| = 3 + k$ will be even and $|M_2\wedge M_k| =\:4 + k$ will be odd, and vice versa for $k$ even.\\
Therefore, for $n\geq6$ there exists at least one game with an even number of moves and one with an odd number of moves, giving both players a chance of winning the game.
\end{proof}

Note that this proof only addresses whether it is possible for either player to win, not that any player has the winning strategy. Later on, we discuss who may or may not have the winning strategy in games of multiple players, and in the two-player case, a strategy for some $n$.

\subsection{The Game with Only Combining Moves}
We now investigate this generalized Zeckendorf game where only combining moves are performed. We show that in this type of game the least amount of moves are performed compared to any other game on $n$.
\\

For this section we use the following notation:

$a_i$: The $i^{\text{th}}$ term in the generalized sequence.

$\delta_i$: The coefficient of the $a_i$ in the final decomposition of $n$.

$k$: The largest index in the unique decomposition of $n$, thus the decomposition is written as: \begin{equation}
n\ =\ \delta_1a_1\ +\ \delta_2a_2\ +\ \cdots\ +\ \delta_ka_k.
\end{equation}

$C_i$: A combining move on $a_i$, i.e., $$\{a_i^{i}\wedge a_{i-1}\rightarrow a_{i+1}\}.$$

$S_i$: A splitting move on $a_i$, i.e., $$\{a_i^{i+1}\rightarrow a_{i+1}\wedge a_{i-1}^{i-2}\wedge a_{i-2}\}.$$

$MC_i$: The total number of $C_i$ moves performed in a game on $n$.

$MS_i$: The total number of $S_i$ moves performed in a game on $n$.

$MC(n)$: The sum of all $MC_i$ (for $1\leq i \leq k$) performed in a game on $n$.


\begin{lem}\label{combineGameExists}
For any $n\in\mathbb{N}$, it is possible to play the game on just combining moves.
\end{lem}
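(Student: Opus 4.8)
The plan is to prove the lemma constructively, by exhibiting an explicit sequence of combining moves that transforms the initial pile $\{a_1^n\}$ into the unique legal decomposition of $n$ guaranteed by Theorem~\ref{existUnique}. The engine of the construction is a recursive subroutine $\mathrm{Build}(j)$ that manufactures a single copy of $a_j$ out of $a_j$ copies of $a_1$, using only combining moves; I then invoke it the right number of times for each index.

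First I would define $\mathrm{Build}(1)$ to designate one existing $1$ from the pile (no move), $\mathrm{Build}(2)$ to apply $\{1^2\rightarrow 2\}$, and, for $j\ge 3$, let $\mathrm{Build}(j)$ first run $\mathrm{Build}(j-1)$ a total of $j-1$ times and $\mathrm{Build}(j-2)$ once, and then apply the combining move $\{a_{j-1}^{\,j-1}\wedge a_{j-2}\rightarrow a_j\}$. I would then prove, by strong induction on $j$, the key invariant: started in any state with at least $a_j$ copies of $1$, the routine $\mathrm{Build}(j)$ performs only legal combining moves and has the net effect of raising the count of $a_j$ by one, lowering the count of $1$'s by exactly $a_j$, and leaving every other count unchanged. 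The identity $(j-1)a_{j-1}+a_{j-2}=a_j$ handles the $1$-count bookkeeping, and the recursion produces the $j-1$ copies of $a_{j-1}$ and one $a_{j-2}$ required so that the final combining move is legal.

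With the subroutine in hand, I write the legal decomposition as $n=\sum_{i=1}^k\delta_i a_i$ and run $\mathrm{Build}(i)$ exactly $\delta_i$ times for each $i$ from $2$ to $k$, always drawing $1$'s from the pile. By the invariant, each call changes only the count of its own index (up by one) and the reservoir of $1$'s (down by $a_i$), so after all calls the state consists of $\{a_i^{\delta_i}\}_{i=2}^k$ together with $n-\sum_{i\ge 2}\delta_i a_i=\delta_1$ leftover $1$'s. That is exactly the legal decomposition of $n$, a terminal state, and we reached it using only combining moves, which proves the lemma.

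The step I expect to be the main obstacle is the net-effect invariant, specifically the claim that $\mathrm{Build}(j)$ leaves all other counts unchanged even though it freely creates intermediate copies of lower terms. The subtlety is that the final combining move consumes $j-1$ copies of $a_{j-1}$ and could in principle swallow copies that are ``meant'' for the final decomposition or for an outer call. The resolution, which I would state carefully, is that the game state is a multiset, so individual copies are indistinguishable and only net counts matter: $\mathrm{Build}(j)$ raises the count of $a_{j-1}$ by $j-1$ (through its recursive calls) and then lowers it by $j-1$ (through the combine), for a net change of zero, and likewise for $a_{j-2}$; hence any terms already present are preserved. The remaining points—that each combine's precondition is met, and that at least $a_i$ ones remain at the start of each call (which holds since the cumulative consumption never exceeds $n$)—I would verify and treat as routine bookkeeping.
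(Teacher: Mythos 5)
Your proposal is correct and takes essentially the same approach as the paper: induction on the index, using the recurrence $a_j=(j-1)a_{j-1}+a_{j-2}$ to manufacture each term from $1$'s by combining moves alone, then invoking this construction once per summand of the legal decomposition of $n$. Your $\mathrm{Build}$ subroutine and net-count invariant merely formalize bookkeeping the paper leaves informal (and the paper assembles summands from the largest index downward while you go upward, a difference that is immaterial since the game state is a multiset).
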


\begin{proof}
We first show that this is true for any term in the sequence by inducting on the index of the $a_i$.

\textit{Base cases:}
$i=1$: We play the game on $a_{1}=1$. This game has 0 moves.

$i=2$: We play the game on $a_{2}=2$, which consists of one combining move: perform $C_1$ by combining two $1$'s to get one $2$.

$i=3$: We play the game on $a_{3}=5$, which consists of three combining moves: perform $C_1$ twice to get two $2$'s, then $C_2$ by combining two $2$'s with one $1$ to get $5$.

\textit{Inductive step:} Suppose for all $a_{i},\ i<j$ for some $j\in\mathbb{N}$, the game on $a_{i}$ can be played using only combining moves. Since $a_j=(j-1)a_{j-1}+a_{j-2}$, all that needs to be done is to perform the combining moves necessary to get $(j-1)$ $a_{j-1}$'s and one $a_{j-2}$, then perform a $C_{j-1}$ move to get one $a_{j}$.

Since an arbitrary $n$ has the decomposition $$n\ =\ \delta_1 a_{1}\ +\ \delta_2 a_{2}\ +\ \cdots\ +\ \delta_k a_{k},$$
a game with all combining moves can be played by achieving first $\delta_k a_{k}$, then $\delta_{k-1} a_{k-1}$, and so on as described above until the decomposition is achieved.
\end{proof}

\begin{thm}\label{combining}
The total number of combining moves, $MC(n)$, for a game on $n$ is a constant independent of how the game is played.
\end{thm}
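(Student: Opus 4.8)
The plan is to build a \emph{weighted potential} on game states that increases by exactly $1$ under every combining move and is left unchanged by every splitting move; then the total number of combining moves is forced to equal the net change in this potential, which depends only on the starting and ending states and not on the play. Concretely, I would assign to each index $i$ a weight $b_i$ and, to a state containing $c_i$ copies of $a_i$, the potential $\Phi = \sum_i c_i b_i$. The initial state $\{a_1^n\}$ has $\Phi = n\,b_1$, and since the game always terminates at the \emph{unique} legal decomposition $n = \sum_i \delta_i a_i$ (by the finiteness theorem together with uniqueness from Theorem~\ref{existUnique}), the terminal state has $\Phi = \sum_i \delta_i b_i$ \emph{regardless of how the game was played}. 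If I can arrange that each combining move changes $\Phi$ by $+1$ and each splitting move changes it by $0$, then $MC(n) = \Phi_{\text{final}} - \Phi_{\text{initial}} = \sum_i \delta_i b_i - n\,b_1$, which is manifestly independent of the sequence of moves.

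The weights should be chosen to satisfy the constraint imposed by the combining move $C_i = \{a_i^{\,i}\wedge a_{i-1}\rightarrow a_{i+1}\}$, whose effect on $\Phi$ is $b_{i+1} - i\,b_i - b_{i-1}$. Demanding this equal $1$ leads me to define the auxiliary sequence
\begin{equation}\label{eq:bseq}
	b_1 = 0, \qquad b_2 = 1, \qquad b_{i+1} = i\,b_i + b_{i-1} + 1 \quad (i \geq 2).
\end{equation}
With this choice every interior combining move contributes exactly $+1$ by construction, and the boundary move $\{1^2\rightarrow 2\}$ contributes $b_2 - 2b_1 = 1$, which is exactly why I am forced to take $b_2 = 2b_1 + 1$. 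The crucial observation for the splitting moves is the purely algebraic identity
\begin{equation}\label{eq:identity}
	\big(b_{i+1} + (i-2)b_{i-1} + b_{i-2} - (i+1)b_i\big) \ =\ \big(b_{i+1} - i\,b_i - b_{i-1}\big) - \big(b_i - (i-1)b_{i-1} - b_{i-2}\big),
\end{equation}
whose left side is precisely the change in $\Phi$ produced by the splitting move $S_i = \{a_i^{\,i+1}\rightarrow a_{i+1}\wedge a_{i-1}^{\,i-2}\wedge a_{i-2}\}$, and whose right side is the difference of the combining-move expressions at indices $i$ and $i-1$. Since each of those equals $1$ by \eqref{eq:bseq}, the splitting change is $1 - 1 = 0$, exactly as required.

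I would then finish by checking the remaining edge cases directly. For the exceptional splitting move $\{2^3\rightarrow 1\wedge 5\}$ the change is $b_1 + b_3 - 3b_2 = 0 + 3 - 3 = 0$ using $b_3 = 2b_2 + b_1 + 1 = 3$; one should note this vanishes precisely because the initial condition $b_2 = 2b_1 + 1$ coincides with the one forced by $\{1^2\rightarrow2\}$, so the low-index moves are mutually consistent. Combining these verifications, every combining move raises $\Phi$ by $1$ and every splitting move fixes it, so $MC(n) = \sum_i \delta_i b_i$ (using $b_1 = 0$), a constant determined by the decomposition of $n$ alone. The main obstacle I anticipate is not the bulk argument, which is driven cleanly by the recurrence \eqref{eq:bseq} and the identity \eqref{eq:identity}, but rather the careful bookkeeping at small indices: one must confirm that the two exceptional moves $\{1^2\rightarrow2\}$ and $\{2^3\rightarrow1\wedge5\}$ impose the \emph{same} seeding of \eqref{eq:bseq}, so that a single weight sequence simultaneously handles all four move types.
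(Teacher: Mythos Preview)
Your argument is correct and is a genuinely different, considerably cleaner route than the paper's. The paper proceeds by writing down, for each index $i$, a linear bookkeeping equation expressing the final coefficient $\delta_i$ in terms of all the move counts $MC_j$ and $MS_j$; it then assembles these into a block matrix $M = [A\mid B]$, row-reduces, observes that the $MC_j$ are pivot variables while the $MS_j$ are free, and concludes that $\sum_j MC_j = (0,1,\dots,1)\,A^{-1}\delta$ depends only on $\delta$. Your potential $\Phi = \sum_i c_i b_i$ with $b_{i+1} = i b_i + b_{i-1} + 1$ accomplishes the same invariance in a single stroke: the recurrence for $b_i$ is precisely engineered so that the row vector $(b_1,b_2,\dots)$ left-annihilates the splitting columns of the paper's matrix while hitting each combining column with a $1$. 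In effect you have found the specific left eigenvector the paper's row reduction is implicitly computing.

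Two remarks on what each approach buys. First, your weights $b_i$ coincide with the sequence the paper later calls $MC(a_i)$, so your final formula $MC(n) = \sum_i \delta_i b_i$ simultaneously proves the paper's Theorem~\ref{combineMovesFormula}, which there requires a separate and substantially longer matrix-inversion argument. Second, the paper's linear-algebraic setup, while heavier, yields slightly more as a byproduct: it displays the full solution space and shows explicitly that the total number of moves equals $MC(n) + \sum_j MS_j$ with the $MS_j$ completely unconstrained, which is the mechanism behind Corollary~\ref{shortestGame}. Your approach recovers this too (one can introduce a second potential, or simply note that the terminal state is fixed so total moves minus $MC(n)$ is the splitting count), but the paper's framework makes the degrees of freedom visible all at once.
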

\begin{proof}
We show this using a system of equations for the final coefficients of the $a_i$ in the decomposition, $\delta_i$, in terms of the $MC_i$ and $MS_i$. For $\delta_1$, note that at the beginning of the game we start with $n$ $1$'s. Every $C_1$ move decreases the amount of $1$'s by two, and every $C_2$ move decreases the amount by one. Every $S_2$ and $S_3$ move increases the amount of $1$'s by one. For $\delta_2$, note that at the beginning of the game we start with zero $2$'s. Every $C_1$ move increases the amount of $2$'s by one, every $C_2$ move decreases the amount by two, and every $C_3$ move decreases the amount by one. Every $S_2$ move decreases the amount of $2$'s by three and every $S_3$ and $S_4$ move increase the amount by one. Hence we have the following equations for $\delta_1$ and $\delta_2$:
\begin{align}
   \delta_1 &= n-2MC_1-MC_2+MS_2+MS_3,\nonumber\\
   \delta_2 &= MC_1-2MC_2-MC_3-3MS_2+MS_3+MS_4.
\end{align}
For $3\leq i\leq k$ ($k$ being the largest index in the final decomposition), every $C_{i-1}$ move increases the amount of $a_i$ by $1$, $C_{i}$ decreases the amount by $i$, and $C_{i+1}$ decreases the amount by $1$. As for splitting moves, every $S_{i-1}$ increases the amount of $a_i$ by $1$, $S_i$ decreases the amount by $i+1$, $S_{i+1}$ increases the amount by $i-1$, and $S_{i+2}$ increases the amount by $1$. Thus for $\delta_i$ we have the equation
 \begin{align}
    \delta_i &= MC_{i-1}-iMC_{i}-MC_{i+1}+MS_{i-1}-(i+1)MS_i+(i-1)MS_{i+1}+MS_{i+2}.
\end{align}

Note that $C_k=C_{k+1}=S_k=S_{k+1}=S_{k+2}=0$, so they will not be variables in our system of equations. With this system of equations we produce a matrix which we will use to prove Theorem \ref{combining}.

Let $M=\begin{pmatrix}
\begin{bmatrix}
A\end{bmatrix}\begin{bmatrix}
B\end{bmatrix}\end{pmatrix}$
where
\begin{equation}\label{matrixA}
    A\ =\
\begin{bmatrix}
1&-2&-1&0&0&0&\cdots&&\cdots&0\\
0&1&-2&-1&0&0&\cdots&&\cdots&0\\
0&0&1&-3&-1&0&\cdots&&\cdots&0\\
\vdots&&&\ddots&\ddots&\ddots&&&&\vdots\\
0&\cdots&\cdots&0&1&-i&-1&0&\cdots&0\\
\vdots&&&&&\ddots&\ddots&\ddots&&\vdots\\
0&\cdots&&&\cdots&0&1&3-k&-1&0\\
0&\cdots&&&\cdots&0&0&1&2-k&-1\\
0&\cdots&&&\cdots&0&0&0&1&1-k\\
0&\cdots&&&\cdots&0&0&0&0&1\\
\end{bmatrix}
\end{equation}
and \begin{equation}
B\ =\
\begin{bmatrix}
1&1&0&0&0&0&\cdots&&\cdots&0\\
-3&1&1&0&0&0&\cdots&&\cdots&0\\
1&-4&2&1&0&0&\cdots&&\cdots&0\\
0&1&-5&3&1&0&\cdots&&\cdots&0\\
\vdots&&\ddots&\ddots&\ddots&\ddots&&&&\vdots\\
0&\cdots&0&1&-i-1&i-1&1&0&\cdots&0\\
\vdots&&&&\ddots&\ddots&\ddots&\ddots&&\vdots\\
0&\cdots&&\cdots&0&1&3-k&k-5&1&0\\
0&\cdots&&\cdots&0&0&1&2-k&k-4&1\\
0&\cdots&&\cdots&0&0&0&1&1-k&k-3\\
0&\cdots&&\cdots&0&0&0&0&1&-k\\
0&\cdots&&\cdots&0&0&0&0&0&1\\
\end{bmatrix}.
\end{equation}\\
Note $A$ is the $k\times k$ invertible submatrix of the $n$ and $MC_i$ terms, and $B$ is the $k\times (k-2)$ submatrix of the $MS_i$ terms.

For the vectors
\begin{equation}\label{v and delta}
\delta = \begin{pmatrix}
\delta_1\\\delta_2\\\vdots\\\delta_k
\end{pmatrix},\ v = \begin{pmatrix}
n\\MC_1\\\vdots\\MC_{k-1}\\MS_2\\\vdots\\ MS_{k-1}\\
\end{pmatrix},
\end{equation}
we have $Mv\ =\ \delta$. To find an expression for the total number of moves in the game, we must multiply
\begin{equation}
\begin{pmatrix}
0&1&1&\cdots&1
\end{pmatrix}v.
\end{equation}

In reduced row echelon form, the equation $Mv\ =\ \delta$ is as follows:
\begin{equation}
\begin{pmatrix}
\begin{bmatrix}
I_k
\end{bmatrix}
\begin{bmatrix}
0&0&0&0&\cdots&0\\
-1&0&0&0&\cdots&0\\
1&-1&0&0&\cdots&0\\
0&1&-1&0&\cdots&0\\
\vdots&&\ddots&\ddots&&\vdots\\
0&\cdots&0&1&-1&0\\
0&\cdots&0&0&1&-1\\
0&\cdots&0&0&0&1\\
\end{bmatrix}
\end{pmatrix}
\begin{pmatrix}
n\\MC_1\\\vdots\\MC_{k-1}\\MS_2\\\vdots\\ MS_{k-1}\\
\end{pmatrix}
= \begin{pmatrix}
\delta_1\\\delta_2\\\vdots\\\delta_k
\end{pmatrix}.
\end{equation}
From this we can see that the $n$ and $MC_i$ terms are pivot variables, and the $MS_i$ are free variables. With this we can solve for $v$:
\begin{equation}
\begin{pmatrix}
n\\MC_1\\\vdots\\MC_{k-1}\\\hline MS_2\\\vdots\\ MS_{k-1}\\
\end{pmatrix}
=\begin{pmatrix}
A^{-1}\begin{pmatrix}
\delta_1\\\delta_2\\\vdots\\\delta_k\\
\end{pmatrix}\\\hline
0\\\vdots\\0
\end{pmatrix}\ +\ MS_2\begin{pmatrix}
0\\1\\-1\\0\\0\\\vdots\\\hline 1\\0\\0\\\vdots\\0
\end{pmatrix}\ +\ MS_3\begin{pmatrix}
0\\0\\1\\-1\\0\\\vdots\\\hline 0\\1\\0\\\vdots\\0
\end{pmatrix}\ +\\\cdots\ +\  MS_{k-1}\begin{pmatrix}
0\\0\\\vdots\\0\\1\\-1\\\hline 0\\0\\\vdots\\0\\1
\end{pmatrix}.
\end{equation}
We then need to multiply the right hand side by $\begin{pmatrix}
0&1&1&\cdots&1\end{pmatrix}$. Thus the total number of moves is given by
\begin{equation}
\begin{pmatrix}
0&1&1&\cdots&1\end{pmatrix}A^{-1}\begin{pmatrix}
\delta_1\\\vdots\\\delta_k
\end{pmatrix}
\ +\ MS_2\ +\ MS_3\ +\ \cdots\ +\  MS_{k-1}.
\end{equation}
Note that all $MS_i$ terms are left in their original form in the equation, but the sum of $MC_i$ terms, $MC(n)$, is now replaced with $\begin{pmatrix}
0&1&1&\cdots&1\end{pmatrix}A^{-1}\begin{pmatrix}
\delta_1\\\vdots\\\delta_k
\end{pmatrix}$. This value is based solely on the unique decomposition of $n$, thus is constant no matter how the game on $n$ is played.
\end{proof}

\begin{corollary}\label{shortestGame}
A game with only combining moves realizes the shortest game for all $n$.
\end{corollary}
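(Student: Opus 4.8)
The plan is to read the result straight off the move-count formula derived in the proof of Theorem~\ref{combining}. There it is shown that for any completed game on $n$, the total number of moves equals
\begin{equation}
MC(n) + MS_2 + MS_3 + \cdots + MS_{k-1},
\end{equation}
where $MC(n)$ is a constant depending only on the unique legal decomposition of $n$ (and not on how the game is played), while each $MS_i$ counts the number of splitting moves of type $i$ actually performed during that game. The entire corollary will follow from isolating the roles of these two pieces.

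First I would note that every $MS_i \ge 0$, since each is a count of moves. Hence the total number of moves in \emph{any} game on $n$ is at least $MC(n)$, with equality exactly when $MS_2 = MS_3 = \cdots = MS_{k-1} = 0$, i.e.\ when no splitting moves are used. Next I would invoke Lemma~\ref{combineGameExists}, which guarantees that for every $n\in\mathbb{N}$ there is a valid game using only combining moves, terminating at the unique decomposition of $n$. In such a game all $MS_i$ vanish, so its total length is exactly $MC(n)$; it therefore attains the universal lower bound. Putting these two observations together shows that a combining-only game is a shortest game for every $n$, which is the claim.

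I do not expect a genuine obstacle here, as the substantive work was already carried out in Theorem~\ref{combining}, whose matrix computation exhibited $MC(n)$ as a constant pivot contribution and the $MS_i$ as free variables entering the move count each with coefficient $+1$. The only points needing care are bookkeeping rather than difficulty: I must confirm that the move-count identity above holds for \emph{every} terminating game on $n$ (so that the bound $MC(n)$ is genuinely universal), and that the combining-only play furnished by Lemma~\ref{combineGameExists} is a legitimate terminating game reaching the unique decomposition, so that it is an admissible competitor realizing the minimum. Both of these are immediate from the results already established.
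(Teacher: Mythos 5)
Your proposal is correct and follows essentially the same route as the paper: it reads the corollary off the move-count identity from Theorem~\ref{combining}, observing that the splitting counts $MS_i \ge 0$ enter the total with coefficient $+1$, so any splitting move lengthens the game beyond the constant $MC(n)$. Your explicit appeal to Lemma~\ref{combineGameExists} to show the lower bound is actually attained is a point the paper leaves implicit, but it is the same argument.
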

\begin{proof}
This follows directly from Theorem \ref{combining}. Since $MC(n)$ is constant for any game on $n$, performing splitting moves will increase the length of the game.
\end{proof}

\begin{thm}
On a game starting with $n$ where only combining moves are performed, the game never has more moves than $.7757n$.
\end{thm}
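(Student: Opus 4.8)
The plan is to reduce the quantity to be bounded — the number of moves in a combining-only game, which by Theorem~\ref{combining} and Corollary~\ref{shortestGame} equals the constant $MC(n)$ — to a sum of ``building costs'' for the individual summands of the legal decomposition of $n$. Write the decomposition as $n = \sum_{i=1}^{k}\delta_i a_i$, and let $f(i)$ denote the number of combining moves needed to assemble a single $a_i$ starting from $a_i$ copies of $1$. Since assembling one $a_i$ is itself a combining game on the input $a_i$, Theorem~\ref{combining} guarantees that $f(i)$ is well defined (independent of the order of play); building one $a_{i+1}$ out of $i$ copies of $a_i$ together with one $a_{i-1}$, followed by a single $C_i$ move, yields $f(i+1) = i\,f(i) + f(i-1) + 1$ with $f(1)=0$ and $f(2)=1$.

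First I would show $MC(n) = \sum_{i=1}^{k}\delta_i f(i)$. Because $MC(n)$ is independent of how the combining game is played, I may evaluate it on the particular strategy that partitions the initial $n$ ones into $\delta_i$ blocks of $a_i$ ones each (legitimate since $\sum_i \delta_i a_i = n$) and builds each block up to its target summand using only combining moves; this terminates at $\{a_i^{\delta_i}\}$, the legal decomposition, and uses exactly $\sum_i \delta_i f(i)$ moves. This is the conceptual crux: it is what converts the global, order-independent move count into a weighted sum over summands, and it is the step I expect to require the most care, as it leans essentially on Theorem~\ref{combining}.

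The technical heart is the estimate $f(i) \le a_i \sum_{j=2}^{i} 1/a_j$, which I would prove by strong induction on $i$. The base cases $f(1)=0$ and $f(2)=1=a_2\cdot\tfrac12$ are immediate. For the inductive step, set $S_i := \sum_{j=2}^{i} 1/a_j$, apply the hypothesis to $f(i)$ and $f(i-1)$ in the recurrence, and use $a_{i+1}=i a_i + a_{i-1}$. The target inequality $f(i+1)\le a_{i+1}S_{i+1} = i a_i S_i + a_{i-1}S_i + 1$ then reduces, after cancelling the common terms $i a_i S_i + 1$, to $a_{i-1}S_{i-1}\le a_{i-1}S_i$, i.e.\ to $S_{i-1}\le S_i$, which holds because $S_i = S_{i-1} + 1/a_i$.

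Combining these gives $MC(n) = \sum_i \delta_i f(i) \le \sum_i \delta_i a_i S_i \le S\sum_i \delta_i a_i = S\,n$, where $S := \sum_{j=2}^{\infty} 1/a_j$. The final step is to certify $S < 0.7757$: since $a_j$ grows faster than factorially, the series converges extremely rapidly, and summing the first several terms $1/2 + 1/5 + 1/17 + 1/73 + 1/382 + \cdots$ together with a crude geometric tail bound gives $S = 0.77563\ldots < 0.7757$, which completes the proof. Everything downstream of the reduction $MC(n)=\sum_i\delta_i f(i)$ is a clean telescoping induction and a fast numerical series estimate, so the reduction is the one genuine obstacle.
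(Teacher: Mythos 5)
Your proposal is correct, but it reaches the bound by a genuinely different route than the paper. The paper's proof is direct and local: in a game with no splitting moves, material only flows upward, so each $C_i$ move consumes $i$ copies of $a_i$ plus one $a_{i-1}$, i.e., stock built from $a_{i+1}$ of the original $1$'s, and distinct $C_i$ moves consume disjoint stock; hence $MC_i \le n/a_{i+1}$, and summing gives $MC(n) \le n\sum_{i\ge 1} 1/a_{i+1}$ with no appeal to Theorem~\ref{combining} whatsoever. You instead route through the exact formula $MC(n) = \sum_i \delta_i\, MC(a_i)$ --- which is precisely the paper's Theorem~\ref{combineMovesFormula}, proved there by explicitly inverting the matrix $A$ of \eqref{matrixA} --- and your block-partition argument (split the $n$ ones into $\delta_i$ groups of $a_i$ and invoke order-independence) is a substantially simpler proof of that formula than the paper's matrix computation; it is sound because the block game is a legal combine-only game terminating at the unique legal decomposition, so Theorem~\ref{combining} applies. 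Your inductive estimate $MC(a_i) \le a_i \sum_{j=2}^{i} 1/a_j$ does check out (the induction collapses to $S_{i-1} \le S_i$ after using $a_{i+1} = i a_i + a_{i-1}$), and it delivers $MC(n) \le n \sum_{j\ge 2} 1/a_j$, which is literally the same series the paper bounds; the numerical finish (partial sums plus the geometric tail from $1/a_{j+1} < 1/(2a_j)$, giving a value near $0.77563 < 0.7757$) is identical in both arguments. In short: the paper's approach buys brevity and self-containedness, while yours buys the exact move-count formula as a byproduct --- with a cleaner proof than the paper's own --- at the cost of leaning on the constancy result, and of being somewhat longer for the theorem actually at hand.
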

\begin{proof}
In a game on $n$ with no splitting moves, we have $MC_1\leq n/2$ since we need two $1$'s to perform a $C_1$ move. Likewise, $MC_2\leq n/5$ since five $1$'s are needed to perform $C_2$, and so on. More generally, $MC_{i}\leq {n}/{a_{i+1}}$, for all $i\in\mathbb{N}$. Hence for the game on $n$ the combining moves are bounded as:
\begin{equation}
    MC(n)\ \leq\  n \sum_{i=1}^{k} \frac{1}{a_{i+1}}.
\end{equation}

We now prove that
\begin{equation}\label{originalinequality}
      \sum_{i=1}^{k} \frac{1}{a_{i+1}}\ < 0.7757.
\end{equation}

Since for any $i\geq 2$,
\begin{eqnarray}
      a_{i+1} & \ =\ &ia_{i}\ +\ a_{i-1} \nonumber\\
      \frac{1}{a_{i+1}} &\ =\ & \frac{1}{ia_{i} + a_{i-1}} < \frac{1}{ia_{i}}\ \leq \ \frac{1}{2a_{i}},
\end{eqnarray}

so
\begin{equation}
      \frac{1}{a_{i+1}}\ <\ \frac{1}{2a_{i}}.
\end{equation}

We return to the proof of the original inequality (\ref{originalinequality}). For $k \geq 7$,
\begin{align}
      \sum_{i=1}^{k} \frac{1}{a_{i+1}} &= \frac{1}{2} + \frac{1}{5} + \frac{1}{17} +\frac{1}{73} + \frac{1}{382}+ \frac{1}{2365} +\frac{1}{16937}+\cdots+\frac{1}{a_{k+1}}\nonumber\\
      &< \frac{1}{2} + \frac{1}{5} + \frac{1}{17} + \frac{1}{73} + \frac{1}{382}+ \frac{1}{2365}+\frac{1}{16937}+\frac{1}{16937}\left(\frac{1}{2}+ \frac{1}{4}+\frac{1}{8}+\cdots+\frac{1}{2^{k-6}}\right)\nonumber\\
    &\ =\ \frac{1}{2} + \frac{1}{5} + \frac{1}{17} + \frac{1}{73} + \frac{1}{382}+ \frac{1}{2365} +\frac{1}{16937}+\frac{1}{16937}\left(1-\frac{1}{2^{k-6}}\right) \nonumber\\
      & \ < \ \frac{1}{2} + \frac{1}{5} + \frac{1}{17} + \frac{1}{73} + \frac{1}{382}+ \frac{1}{2365} + \frac{1}{16937}+ \frac{1}{16937}\nonumber\\
      &\ <\ 0.7757.
\end{align}
Thus we have that $\sum_{i=1}^{k} {1}/{a_{i+1}} <0.7757$,
and
\begin{equation}
    MC(n)\ \leq\  n \sum_{i=1}^{k} \frac{1}{a_{i+1}}\ <\ 0.7757n.
\end{equation}

\end{proof}

Experimental data for the value of $MC(a_i)$ for $i\in [1,100]$ suggest that this upper bound can be tightened further. As $i$ approaches $100$, the number of combining moves for a game on $a_{i}$ approaches a value around $0.6601\:a_{i}$. The exact value that $MC(a_i)$ converges to as $i\rightarrow\infty$ is currently unknown.

\subsection{The Number of Moves in a Combine Only Game}

In this section we derive the exact formula for $MC(n)$ which can be evaluated for any $n$. Note that for $a_i$, a term in the sequence, we can find $MC(a_i)$ using the following recurrence:
$$MC(a_1)\ =\ 0,\  MC(a_2)\ =\ 1,$$ and for $i\geq 3$
\begin{equation}
 MC(a_i)\ =\ (i-1)MC(a_{i-1})\ +\ MC(a_{i-2})\ +\ 1.
\end{equation}
This is due to the repetitive nature of the game with only combining moves, as demonstrated in Lemma \ref{combineGameExists}. The first several terms in this sequence are
$$0,\ 1,\ 3,\ 11,\ 48,\ 252,\ 1561,\ 11180,\ \dots.$$

The values $MC(a_i)$ ($i=1,\dots, k$) can then be used to find the value of $MC(n)$ for an arbitrary $n\in\mathbb{N}$ with decomposition $n=\delta_1a_1+\delta_2a_2+\cdots+\delta_ka_k.$

\begin{thm}\label{combineMovesFormula}
The number of combining moves in a game on $n$ with decomposition\\ $n=\delta_1a_1+\delta_2a_2+\cdots+\delta_ka_k$ is $$MC(n)\ =\ \delta_2MC(a_2)\ +\ \delta_3MC(a_3)\ +\ \cdots\ +\ \delta_kMC(a_k).$$
\end{thm}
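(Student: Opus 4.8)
The plan is to exploit the invariance of $MC(n)$ established in Theorem~\ref{combining}: since the number of combining moves does not depend on how the game is played, I am free to evaluate it on one conveniently chosen combine-only game and simply read off the answer. The combine-only game I will use is the one produced in Lemma~\ref{combineGameExists}, but organized so that the $\delta_i$ copies of each $a_i$ are assembled from disjoint pools of $1$'s.

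First I would partition the initial $n$ ones into groups: for each index $i$ with $2 \le i \le k$, set aside $\delta_i$ disjoint blocks of $a_i$ ones each. This is possible because $\sum_{i} \delta_i a_i = n$ accounts for every one exactly once (the $\delta_1$ ones are already in final form and require no moves). Within a single block of $a_i$ ones I would run the combine-only construction of Lemma~\ref{combineGameExists}, which turns those $a_i$ ones into a single $a_i$. By Theorem~\ref{combining} applied to the integer $a_i$ itself, every such construction uses exactly $MC(a_i)$ combining moves, independent of the order chosen.

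Next I would argue that these block constructions concatenate into a single legal combine-only game on $n$ without interference. The key point is that each combining move is local: it consumes only the specific terms named in the move and produces one new term, so moves performed inside one block never require or disturb terms living in another block. Running the blocks consecutively (or interleaving them arbitrarily) therefore yields a valid sequence of moves whose final state is exactly $\delta_i$ copies of each $a_i$, that is, the unique legal decomposition of $n$. Since the blocks are disjoint, the move count is additive, giving $\sum_{i=2}^{k} \delta_i\, MC(a_i)$; because $MC(a_1)=0$ this also equals $\sum_{i=1}^{k}\delta_i MC(a_i)$, and by Theorem~\ref{combining} it equals $MC(n)$, which is the claim.

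The step I expect to require the most care is the locality and additivity argument: making precise that assembling the copies of $a_i$ from disjoint blocks of ones is both a legal play (each intermediate combining move always has the terms it needs available within its own block) and genuinely additive in the move count. Once that bookkeeping is granted, the conclusion is immediate from the constancy of $MC(n)$. As an alternative route, I note that Theorem~\ref{combining} already exhibits $MC(n)$ as the linear form $\begin{pmatrix}0&1&\cdots&1\end{pmatrix}A^{-1}\delta$ in the decomposition vector $\delta$; specializing to $n=a_i$, so that $\delta$ is the $i$-th standard basis vector, forces the coefficient of $\delta_i$ to be $MC(a_i)$, and linearity then yields the formula directly, provided one checks that these coefficients are stable as the ambient size $k$ grows.
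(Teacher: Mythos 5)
Your main argument is correct, and it takes a genuinely different route from the paper. The paper re-derives the combine-only system of equations $Au=\delta$, explicitly inverts the coefficient matrix $A$ by exhibiting a matrix $B$ of generalized sequences satisfying $AB=I_k$, and then shows via a recurrence identity that the entries of $\begin{pmatrix}0&1&\cdots&1\end{pmatrix}A^{-1}$ are exactly $MC(a_2),\ldots,MC(a_k)$, so multiplying by $\delta$ gives the formula. You instead build one explicit combine-only game: partition the $n$ ones into $\delta_1$ untouched ones and, for each $i\geq 2$, $\delta_i$ disjoint blocks of $a_i$ ones; run the construction of Lemma~\ref{combineGameExists} inside each block, which is a complete game on $a_i$ and hence uses exactly $MC(a_i)$ moves by Theorem~\ref{combining}; note that combining-move legality is a lower-bound condition on multiplicities, so extra terms from other blocks never invalidate a move and the concatenation is a legal game on $n$ terminating at the unique decomposition; then invoke Theorem~\ref{combining} once more to transfer the count $\sum_{i\geq 2}\delta_i\,MC(a_i)$ to every game on $n$. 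Both proofs rely on Theorem~\ref{combining} for that final transfer; the difference is in how the combine-only count is computed --- linear algebra in the paper, a locality-and-additivity argument on disjoint blocks in yours. Your route is shorter and more elementary, and it makes the combinatorial reason for the linearity of $MC$ in $\delta$ transparent; the paper's route yields extra structural information (the explicit inverse $A^{-1}$ and the sequences $B_{i,j}$) that it reuses elsewhere. Your alternative sketch at the end, specializing $\begin{pmatrix}0&1&\cdots&1\end{pmatrix}A^{-1}\delta$ to $n=a_i$, is essentially the paper's computation in disguise; the stability issue you flag is real but resolves cleanly, since $A$ is unit upper triangular, so the inverse of a leading principal submatrix is the corresponding submatrix of the inverse.
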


We dedicate the rest of this section to prove Theorem \ref{combineMovesFormula}.

In a Combining Only game on any $n$, the number of moves is \begin{equation}MC(n)\ =\ MC_1\ +\ \cdots\ +\ MC_{k-1}\end{equation} where $k$ is the largest index such that $a_{k}$ is in the unique decomposition of $n$, and $C_{k-1}$ is performed at most $k$ times.

Note that for a Combine Only game we have a system of equations similar to the system utilized in the proof of Theorem \ref{combining}, except that all $MS_i$ are $0$. The $\delta_i$ ($1\leq i\leq k$) are the coefficients of the $a_i$ in the final decomposition, and are written in terms of the $MC_i$.

Since a $C_1$ move removes two $1$'s, and $C_2$ removes one, we get the following equation for $\delta_1$:
\begin{align}
\delta_1\ &=\  n-2MC_1-MC_2.
\end{align}
For $2\leq i\leq k$,
\begin{equation}\delta_{i}\ =\ MC_{i-1}-iMC_{i}-MC_{i+1}.\end{equation}
Note that $MC_{k+1}=MC_k=0$.
The system has $k$ equations and $k-1$ unknowns so we can solve for all $k-1$ unknowns, $MC_1,\dots,MC_{k-1}$, and sum them to get $MC(n)$.
We solve this system using matrices.

Let matrix $A$ be as in equation (\ref{matrixA}), the matrix of coefficients of the $n$ and $MC_i$. We also have the vector of the $\delta_i$
as defined in
(\ref{v and delta}). Finally, let $$u=\begin{pmatrix}
n\\MC_1\\\vdots\\MC_{k-1}
\end{pmatrix}.$$
These satisfy the equation $Au\ =\ \delta$. Since $A$ is invertible, we also have $A^{-1}\delta\ =\ u$.
Consider the matrix
\begin{equation}\label{matrixB}
B\ =\ \begin{bmatrix}
B_1(1)&B_1(2)&B_1(3)&B_1(4)&B_1(5)&\cdots&B_1(k)\\
0&B_2(1)&B_2(2)&B_2(3)&B_2(4)&\cdots&B_2(k-1)\\
\vdots&&\ddots&\ddots&&&\vdots\\
0&\cdots&0&B_i(1)&B_i(2)&\cdots&B_i(k-i+1)\\
\vdots&&&&\ddots&\ddots&\vdots\\
0&\cdots&&\cdots&0&B_{k-1}(1)&B_{k-1}(2)\\
0&\cdots&&\cdots&0&0&B_k(1)\\
\end{bmatrix},
\end{equation}
where the $B_i$ are sequences such that $B_1(m)=a_{m}$ and for all other $i$, $B_i(1)=1, B_i(2)=i,$ and for $m\geq3$, $B_i(m)= (m-1+i)B_i(m-1)+B_i(m-2)$, where $i$ is the row of the matrix, and the $m^{\text{th}}$ element of the sequence, $B_i(m)$, is the element in the $i^{\text{th}}$ row and the $(m+i-1)^{\text{th}}$ column of the matrix $B$. We can also write the recurrence in terms of the columns:
$B_i(1)=1,\ B_i(2)=i,\ B_i(m)=(j)B_i(m-1)+B_i(m-2)$, where $j$ denotes the column of the element $B_i(m-1)$.

It is useful to consider this recurrence solely in terms of the row and column numbers. The entry in the $i^{\text{th}}$ row and $j^{\text{th}}$ column of $B$ is
\begin{equation}
B_{i,j} \ = \
\begin{cases}
      0 &  j<i \\
      1 &  j=i \\
      j-1 &  j=i+1\\
      (j-1)B_{i,(j-1)}+B_{i,(j-2)} & j\geq i+2\ ,
\end{cases}
\end{equation}
or equivalently, \begin{equation}\label{matrix2eq}
B_{i,j} \ = \
\begin{cases}
      1 &  i=j \\
      i &  i=j-1 \\
      iB_{(i+1),j}+B_{(i+2),j} & i\leq j-2\ .
\end{cases}
\end{equation}
\begin{lemma}
For $k\times k$ matrices $A$ and $B$ as in (\ref{matrixA}) and (\ref{matrixB}) respectively, we have $AB=I_k$ and hence $B=A^{-1}$.
\end{lemma}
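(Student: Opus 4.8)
The plan is to verify $AB = I_k$ by computing each entry $(AB)_{r,c} = \sum_{j=1}^{k} A_{r,j}B_{j,c}$ directly and checking it equals $(I_k)_{r,c}$. Two structural facts make this tractable: $B$ is upper triangular, so $B_{j,c} = 0$ whenever $c < j$, and each row of $A$ has at most three nonzero entries. Concretely, for $2 \le r \le k-2$ the nonzero entries of row $r$ are $A_{r,r} = 1$, $A_{r,r+1} = -r$, and $A_{r,r+2} = -1$, so that
\[ (AB)_{r,c} \;=\; B_{r,c} - r\,B_{r+1,c} - B_{r+2,c}; \]
rows $k-1$ and $k$ have the same shape but with trailing entries truncated, while row $1$ is anomalous, carrying $A_{1,2} = -2$ rather than $-1$.

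First I would dispose of the generic rows $2 \le r \le k-2$. When $c < r$ all three terms vanish by upper triangularity; when $c = r$ the expression is $B_{r,r} - r\,B_{r+1,r} - B_{r+2,r} = 1 - 0 - 0 = 1$; and when $c = r+1$ it is $B_{r,r+1} - r\,B_{r+1,r+1} - B_{r+2,r+1} = r - r - 0 = 0$, using the base values $B_{r,r} = 1$ and $B_{r,r+1} = r$. For $c \ge r+2$ the vanishing $B_{r,c} - r\,B_{r+1,c} - B_{r+2,c} = 0$ is exactly the column recurrence \eqref{matrix2eq}, valid precisely in the range $i \le j-2$. The two boundary rows $r = k-1, k$ are even easier, since their support is truncated before reaching any column $c \ge r+2$, so only the diagonal and (for $r = k-1$) the super-diagonal entry must be checked, and these reduce to $B_{r,r} = 1$ and $B_{k-1,k} = k-1$; the recurrence is never invoked.

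The genuinely delicate case is row $1$, where $(AB)_{1,c} = B_{1,c} - 2B_{2,c} - B_{3,c}$ with $B_{1,c} = a_c$. The entries $c = 1$ and $c = 2$ are immediate from upper triangularity and the values $B_{2,2} = 1$ and $B_{1,2} = a_2 = 2$. For $c \ge 2$ what is needed is the identity $a_c = 2B_{2,c} + B_{3,c}$; the column recurrence \eqref{matrix2eq} does not apply here because of the anomalous coefficient $-2$, so this must be argued separately. The key observation is that the combination $g_c := 2B_{2,c} + B_{3,c}$ obeys the same recurrence as the sequence itself: applying the row recurrence $B_{i,j} = (j-1)B_{i,j-1} + B_{i,j-2}$ to rows $i = 2$ and $i = 3$ gives $g_c = (c-1)g_{c-1} + g_{c-2}$, which is precisely $a_c = (c-1)a_{c-1} + a_{c-2}$. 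After checking the first couple of base cases (where the super-diagonal values of $B$ enter), induction through this shared recurrence yields $g_c = a_c$ for all $c \ge 2$, completing row $1$.

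The main obstacle is exactly this row-$1$ identity, together with the bookkeeping of where each recurrence for $B$ is valid: \eqref{matrix2eq} holds only for rows $i \ge 2$, and the row recurrences for $B_{2,\cdot}$ and $B_{3,\cdot}$ only engage once the column index clears the super-diagonal, so the base cases for the induction on $g_c$ must be pushed up just far enough to launch it cleanly. Once these ranges are tracked, every other entry of $AB$ is a routine three-term check, and $AB = I_k$ (hence $B = A^{-1}$, since $A$ is square) follows.
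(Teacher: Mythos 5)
Your proof is correct, and its core is the same as the paper's: verify $AB=I_k$ entry by entry, getting the diagonal entries from the matching triangular structures of $A$ and $B$, the entries below the diagonal from disjoint supports, and the entries with $c\ge r+2$ (for rows $r\ge 2$) from the column recurrence \eqref{matrix2eq}. The genuine difference is your treatment of row $1$, and it is a real improvement on the paper. The paper's Case 2 applies the generic formula $A_{i,0}\bullet B_{0,j}=B_{i,j}-iB_{i+1,j}-B_{i+2,j}$ to every row and cites \eqref{matrix2eq}; but for $i=1$ the first row of $A$ is $(1,-2,-1,0,\dots)$, so the dot product is really $B_{1,j}-2B_{2,j}-B_{3,j}$, and moreover \eqref{matrix2eq} with $i=1$ would assert $B_{1,j}=B_{2,j}+B_{3,j}$, which is false under the definition $B_{1,j}=a_j$ required for the matrix $B$ (for instance $a_3=5$ while $B_{2,3}+B_{3,3}=3$). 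The identity that is actually needed, $a_j=2B_{2,j}+B_{3,j}$, is therefore neither stated nor proved in the paper, and your argument for it---showing that $g_c:=2B_{2,c}+B_{3,c}$ satisfies $g_c=(c-1)g_{c-1}+g_{c-2}$, the same recurrence as $a_c$, by applying the row recurrence to rows $2$ and $3$, then checking the base cases $c=2,3,4$ (the shared recurrence only engages at $c=5$, exactly the range issue you flag)---supplies precisely the missing step. In short, your proposal follows the paper's route where the paper is sound and repairs it where it is not.
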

\begin{proof}
First we show that for all $1\leq i\leq k$, $A_{i,0}\bullet B_{0,i}=1$, where $\bullet$ represents the dot product, and $A_{i,0}$ denotes the $i^{\text{th}}$ row of $A$, and $B_{0,i}$ denotes the $i^{\text{th}}$ column of $B$. For any row $A_{i,0}$, note that the first non-zero entry is in column $i$ and is always $1$. Similarly, for any column $B_{0,i}$, the last nonzero entry is in row $i$ and is always $1$. Hence for all $i$, $A_{i,0}\bullet B_{0,i}=1$. So for the matrix product $AB = C$, the matrix $C$ has all $1$'s on the diagonal. We must now show that for all $i$, $A_{i,0}\bullet B_{0,j}=0$ for $j\neq i$. \\ \

\noindent Case 1, $j<i$: Here the last nonzero term in $B_{0,j}$ is in row $j$, where the first nonzero entry in $A_{i,0}$ is in column $i$. Hence $A_{i,0}\bullet B_{0,j}=0$. \\ \

\noindent Case 2, $j>i$:
We notice that
\begin{equation}
A_{i,0}\bullet B_{0,j} \ = \ 1(B_{i,j})+(-i)(B_{i+1,j})+(-1)(B_{i+2,j}),
\end{equation}
which is $0$ by (\ref{matrix2eq}).\\
Thus we have $B=A^{-1}$.
\end{proof}
\ \\
We then have
\begin{equation}
MC(n) \ = \ \begin{pmatrix}
0&1&1&...&1\\
\end{pmatrix}A^{-1}\delta.
\end{equation}
We first multiply $\begin{pmatrix}
0&1&1&...&1\\
\end{pmatrix}$ with $A^{-1}$. The $j^{\text{th}}$ entry in this product starting from $j=2$ (when $j=1$ the entry is $0$) is the sum
$$\sum_{i=2}^{j} B_{i,j},$$
where $B_{i,j}$ is the entry in the $i^{\text{th}}$ row and $j^{\text{th}}$ column of matrix $B$ (\ref{matrixB}). Note that $\sum_{i=2}^{2} B_{i,2}=1$ and $\sum_{i=2}^{3} B_{i,3}=3$. Using the recurrence of the $B_{i,j}$'s, it can be shown that
\begin{equation}
\sum_{i=2}^{j} B_{i,j}\ =\  (j-1)\sum_{i=2}^{j-1} B_{i,j-1}\ +\  \sum_{i=2}^{j-2} B_{i,j-2}\ +\ 1.
\end{equation}
Thus this summation follows the recurrence of the $MC(a_i)$, giving us
\begin{equation}
\sum_{i=2}^{j} B_{i,j}\ =\ MC(a_j).
\end{equation}
Hence
\begin{align*}
    \begin{pmatrix}
    0&1&1&\dots&1
    \end{pmatrix}A^{-1} & \ = \ \begin{pmatrix}
    0&1&3&11&48&\dots&MC(a_{k-1})&MC(a_{k})\end{pmatrix} \\& \ =\  \begin{pmatrix}
    MC(1)&MC(2)&MC(5)&\dots&MC(a_{k-1})&MC(a_{k})\end{pmatrix}.
\end{align*}

When multiplied with the vector $\delta$ we get
\begin{align}
MC(n) &= \begin{pmatrix}
MC(1)&MC(2)&MC(5)&\dots&MC(a_{k-1})&MC(a_{k})
\end{pmatrix}
\begin{pmatrix}
\delta_1\\
\delta_2\\
\vdots\\
\delta_k\\
\end{pmatrix}\nonumber\\
&= \delta_2MC(a_2)\ +\ \delta_3MC(a_3)\ +\ \cdots\ +\ \delta_kMC(a_k).
\end{align}
This expression gives us the number of moves in a Combine Only game, or equivalently, the number of combining moves in any game, proving Theorem \ref{combineMovesFormula}. By Corollary \ref{shortestGame}, it is the exact length of the shortest game on $n$.

\section{Winning Strategies}\label{sec:winninggame}
\begin{thm}\label{T6}
When there are at least $4$ players $(p\geq 4)$ in the Generalized Zeckendorf game  and $n\geq 16$, no player has a winning strategy.
\end{thm}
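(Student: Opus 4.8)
\noindent The plan is to fix an arbitrary player $r$ and show that the coalition of the other $p-1 \ge 3$ players can always stop $r$ from moving last. This reduction is legitimate: viewing the coalition as a single adversary that controls every turn whose index is $\not\equiv r \pmod p$, the resulting two-person, perfect-information, finite game (in which the coalition wins exactly when $r$ does not make the last move) is determined, so \emph{$r$ has a winning strategy in the $p$-player game if and only if the coalition cannot force $T \not\equiv r \pmod p$}, where $T$ is the total number of moves. Thus it suffices to exhibit a coalition strategy guaranteeing $T \not\equiv r \pmod p$ against every strategy of $r$.

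The tool for controlling $T$ is a \emph{toggle}. Recall that the sequences $M_1$ and $M_2$ from the proof that either player can win both reduce six copies of $a_1$ to the same configuration $\{a_1 \wedge a_3\}$, but $|M_1| = 3$ while $|M_2| = 4$. Hence whenever six $1$'s are available, the moves that consume them can be played in either $3$ or $4$ steps without altering the residual game, shifting $T$ by exactly $1$. Because $m$ and $m+1$ never share a residue modulo $p$, a \emph{single} toggle that the coalition gets to exercise lets it choose a total avoiding the class $r \pmod p$; this is also precisely why the argument cannot extend to $p=2$, where player two in fact wins.

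First I would establish a reservation lemma: for $n \ge 16$ the coalition can keep a block of six $1$'s (and, if convenient, a second such block) untouched for the entire game while still driving the remaining material to a fully reduced configuration, no matter how $r$ plays; here $n \ge 16$ supplies both the raw material (up to $2\cdot 6 = 12$) and the slack needed to guarantee the main part terminates independently of $r$'s interference. Once everything but the reserved block(s) is finalized, only the toggle moves remain, and the coalition exercises them to fix both the \emph{turn} on which the game's last move falls and its \emph{residue}: with at least three coalition members among every $p$ consecutive turns, one coalition member can be made to occupy the final move, and the $\pm 1$ freedom of the toggle then corrects the residue to be $\not\equiv r$.

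The main obstacle is the adaptivity of $r$: a local toggle shifts the parity of \emph{all} subsequent turns, so $r$'s strategy may respond differently after the shift, and one cannot naively claim that the residual game is unchanged. The heart of the write-up is therefore the reservation lemma together with a careful endgame analysis showing that, once play is confined to the reserved toggle block(s), the remaining moves are essentially forced and the coalition --- precisely because it controls at least three of every $p$ turns --- can both seize the final move and set its residue. Verifying that this control holds for \emph{all} $n \ge 16$ and against every adversarial history, rather than only along one clean canonical line of play, is where the real work lies.
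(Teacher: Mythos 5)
Your proposal is not a proof; it is a plan whose central claims are exactly the hard part, and those claims face concrete obstructions. The game-theoretic reduction (player $r$ versus the coalition, determinacy of a finite perfect-information game) is fine, and your ``toggle'' --- that six $1$'s can be reduced to $\{1\wedge 5\}$ in either $3$ moves ($M_1$) or $4$ moves ($M_2$) --- is precisely the combinatorial identity the paper exploits. But the reservation lemma and the endgame analysis are asserted, not proved, and both are genuinely problematic. First, the $1$'s are indistinguishable and $r$ moves every $p$-th turn, so the coalition cannot ``keep a block of six $1$'s untouched''; moreover coalition members cannot pass, so once the only legal moves involve the reserve, the turn order, not the coalition's wishes, dictates who touches it. Second, the total length $T$ is not ``a constant plus one coalition-controlled $\pm 1$'': $r$ can make splitting moves, or choose between $M_1$/$M_2$-type sequences, at any point, so $r$ has length-altering freedom of its own throughout the game. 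Third, even in the idealized endgame (six $1$'s plus inert residue), which side controls the toggle depends on the turn alignment modulo $p$ at the moment the endgame begins --- e.g., from $\{1^2\wedge 2^2\}$ the player to move decides between $C_2$ (ending in $3$ total toggle moves) and $C_1$ (forcing a fourth move, $S_2$) --- and controlling that alignment is the same problem you set out to solve, a circularity your sketch does not break.

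The paper's proof avoids all of this by never constructing a coalition strategy. It argues by contradiction via strategy stealing: Property $1$ shows that if player $m$ had a winning strategy, no winning path could contain the four consecutive moves $C_1,C_1,C_1,S_2$ with a player other than $m$ making the third, because that player could instead play $C_2$ ($1+2+2=5$), reach the identical position one move sooner, and thereby hand the last move to player $m-1$. Then two short forcing constructions show the opponents can always realize this pattern: directly after $m$'s first move when $p\geq 5$ (Lemma \ref{lem1}, $n\geq 14$), and via a seven-step variant that absorbs $m$'s intervening move when $p=4$ (Lemma \ref{lem2}, $n\geq 16$). This local argument needs no control of the total game length and no global reservation --- which is exactly the machinery your approach would still have to build. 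To salvage your route you would have to prove the reservation and alignment claims against an adaptive $r$, and at present there is no argument, only the acknowledgment that ``the real work lies'' there.
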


\begin{thm}\label{T7}
In a $3$-player Generalized Zeckendorf game $(p=3)$, for any $n \geq 5$, player $2$ will never have a winning strategy.
\end{thm}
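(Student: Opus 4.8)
The plan is to reduce the identity of the winner to a single congruence modulo $3$ and then to show that players~$1$ and~$3$, acting as a coalition, can control it. First I would record a consequence of the proof of Theorem~\ref{combining}: the total number of moves in \emph{any} game on $n$ equals $MC(n)+\sum_i MS_i$, where $MC(n)$ is the play-independent number of combining moves and the $MS_i$ count splitting moves. Writing $MS(n):=\sum_i MS_i$, every game has length $MC(n)+MS(n)$, so with three players moving in the cyclic order $1,2,3,1,2,3,\dots$ the winner is determined by $MC(n)+MS(n)\bmod 3$; in particular player~$2$ wins if and only if $MS(n)\equiv 2-MC(n)\pmod 3$. Because ``player~$2$ has no winning strategy'' is exactly the negation of ``player~$2$ can defeat every pair of opponents,'' I am free to prescribe the strategies of \emph{both} players~$1$ and~$3$; it therefore suffices to give the coalition $\{1,3\}$ a strategy forcing $MS(n)\not\equiv 2-MC(n)\pmod 3$.

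The structural feature I would exploit is that in each round player~$1$ moves immediately before player~$2$ and player~$3$ immediately after, so every move of player~$2$ is bracketed by two coalition moves. The adjustment tool is the gadget underlying the two-player parity theorem above: five $1$'s can be turned into a single $a_3=5$ either by the three combining moves $\{1^2\to 2\},\{1^2\to2\},\{2^2\wedge 1\to 5\}$ or by the four moves $\{1^2\to2\},\{1^2\to2\},\{1^2\to2\},\{2^3\to 5\wedge 1\}$; the two routes produce the identical resulting list but differ by exactly one splitting move, so choosing between them shifts $MS(n)$ by $1$. The coalition fixes a residue $r\not\equiv 2-MC(n)\pmod 3$ and plays combining moves toward the decomposition of $n$, using player~$1$'s setup move in each round to hand player~$2$ a position from which splitting either is unavailable or cannot push $MS(n)$ off target, and using player~$3$'s reply to restore that situation and, when necessary, to deploy the gadget so as to nudge $MS(n)$ back toward $r$.

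I would dispose of small $n$ directly: for $n=5$ the game is forced — $\{1^2\to2\}$, $\{1^2\to2\}$, then $\{2^2\wedge 1\to 5\}$ — so it lasts three moves and player~$3$ wins, whence player~$2$ cannot; a short finite check handles the next few values. The hard part will be the endgame. In the opening and middlegame the coalition has ample freedom because many disjoint gadgets are present, but as the position nears the unique decomposition of $n$ the room to insert or suppress a split shrinks, and I must show that a parity-adjusting option survives until player~$2$'s influence is spent. The most delicate configurations are the forced-split positions, above all $\{2^3\}$, whose only legal move is $\{2^3\to 5\wedge 1\}$, which player~$2$ might try to reach on its own turn; the crux is to prove that player~$1$'s setup move can always prevent player~$2$ from controlling such a split (for example by resolving two $2$'s and a $1$ into a $5$ before a third $2$ appears) while keeping the running value of $MS(n)$ on the chosen residue. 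Carrying out this finite case analysis on the near-terminal positions, and leveraging the bracketing of player~$2$ between a player~$1$ and a player~$3$ move, is where the genuine difficulty lies.
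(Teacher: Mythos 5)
Your reduction is sound as far as it goes: by Theorem \ref{combining} the length of any game is $MC(n)+MS(n)$ with $MC(n)$ play-independent, the identity of the player making the last move depends only on that length modulo $3$, and by determinacy of finite perfect-information games it is indeed enough to exhibit a joint strategy for players $1$ and $3$ forcing $MS(n)\not\equiv 2-MC(n)\pmod 3$. The $n=5$ check is also correct (the game is forced and player $3$ wins). But the proof stops exactly where the theorem lives: you never establish that the coalition can actually control the residue of $MS(n)$ against adversarial play by player $2$. The bracketing observation and the five-ones gadget (three combines versus three combines plus a split, same resulting list) only show that the coalition has an adjusting move \emph{when} a gadget is present and untouched; they do not show that player $2$ cannot burn gadgets, steer the position into forced-split states such as $\{2^3\}$ on its own turn, or time its own splits so that every coalition adjustment is answered. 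You flag this yourself ("the crux is to prove\ldots", "where the genuine difficulty lies"), so what you have is a plan whose central lemma is missing, not a proof.

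The gap is worth contrasting with the paper's argument, which never constructs a coalition strategy at all. The paper argues by strategy stealing: for $n\geq 6$ the first two moves are forced to be $\{1^2\to 2\}$; the opponents then choose $\{1^2\to 2\}$ (player $3$) and $\{2^3\to 1\wedge 5\}$ (player $1$). If player $2$'s assumed winning strategy wins on some continuation of this line, then player $3$ could instead have played $\{2^2\wedge 1\to 5\}$, reaching the \emph{identical} game state one move earlier; the same continuation then ends one move sooner, shifting the last move to the preceding player in cyclic order and contradicting the assumption (this is the paper's Property $1$). Together with the forced $n=5$ game, the theorem follows in a few lines. In other words, the local "collapse a split into a combine" trick that you propose to deploy repeatedly as a residue-adjuster is used in the paper exactly once, at the opening, where the position is completely explicit and no endgame analysis is required. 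If you want to rescue your approach, that is the repair: replace global control of $MS(n)\bmod 3$ by a single early deviation applied to a hypothetical winning strategy of player $2$.
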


\begin{thm}\label{T8}
For any significantly large $n$ $(n \geq 4m^2+8m)$, when there are two alliances, with one having $m$ consecutive players, and the other having $3m$ consecutive players (which we will term the big alliance), then the big alliance always has a winning strategy.
\end{thm}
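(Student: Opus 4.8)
The plan is to recast the statement as a pure timing question and then cash in the big alliance's size advantage. First I would translate the winning condition. There are $4m$ players total, seated so that each alliance occupies a consecutive arc; labeling the turns $1,2,3,\dots$, turn $t$ is taken by a member of the big alliance exactly when $t \bmod 4m$ lands in a fixed block of $3m$ consecutive residues, and by the small alliance otherwise. If a game lasts $T$ moves, the player who makes the final (winning) move is the one taking turn $T$. Hence the big alliance wins if and only if the game \emph{terminates during one of their own turns}, i.e. during their $3m$-block. So the problem reduces to: can the big alliance force termination to occur inside their own block?

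To control termination I would track the quantity $c(G) :=$ the number of combining moves still needed to reach the unique decomposition from the current state $G$. By the combine-invariance of Theorem~\ref{combining} (applied at intermediate states), $c(G)$ is well defined independently of how one finishes. A combining move decreases $c$ by exactly $1$, a splitting move leaves $c$ unchanged, and the game ends precisely when $c=0$ (the last move is always a combine taking $c$ from $1$ to $0$, since only combines decrement $c$). In particular the total length is
\[
T \;=\; MC(n) + S ,
\]
where $S$ is the number of splits actually played. Thus $c$ is an almost-monovariant that only combines can lower, while splits are \emph{stalls}: they preserve $c$ at the price of one extra move and one unit of a finite splitting resource.

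With this in hand the strategy is responsive and rests on a single invariant: whenever the big alliance hands control to the small alliance, they leave $c>m$. Since each of the small alliance's $m$ turns lowers $c$ by at most $1$, the value stays positive throughout the small alliance's block, so the small alliance can never make the terminating move and control always returns. On their own block of $3m$ consecutive turns, the big alliance combines straight down to $c=0$ whenever they enter with $c\le 3m$, winning; this works because $3m$ exceeds the width $m$ of the small alliance's residue block, so ``end somewhere in my own block'' is always reachable. The only obstruction is the danger zone $3m < c \le 4m$ on entry to a big block, where combining every turn would overshoot and force a handover with $c\le m$ (a loss). There I would have them stall instead: replacing up to $m$ combines by splits lets them exit the block with $c\in(m,3m]$, which is simultaneously safe for the small alliance's block and winnable on the next big block. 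Because $c$ only decreases, this danger zone is met at most once, so a reservoir of at most $m$ available stalls converts it into a win.

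The crux, and the place where the hypothesis $n \ge 4m^2+8m$ is spent, is guaranteeing that this reservoir of about $m$ splits is genuinely available at the single moment the danger zone is reached, even though by then the state is close to its final decomposition and splittable configurations are scarce, and even though the small alliance is actively trying to consume or neutralize them. The hard part will be a reservoir-accounting lemma: the big alliance reserves part of the initial supply of $1$'s to manufacture independent stalls (for instance spare triples of $2$'s, each yielding a $\{2^3 \to 1 \wedge 5\}$ stall) and protects them into the endgame. The bound $n \ge 4m^2+8m = 4m(m+2)$ should be exactly what forces enough material, guaranteeing at least $m+2$ full rounds (hence enough big-alliance blocks to build and defend the reservoir) and leaving at least $m$ usable stalls after the small alliance's bounded per-round sabotage. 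I would prove that lemma by bounding how many potential stalls the small alliance can destroy per turn, showing the reserved supply dominates this loss over the relevant rounds, and checking the edge effects near the top of the sequence. Once the reservoir lemma is established, the invariant-plus-responsive-strategy argument above closes the proof.
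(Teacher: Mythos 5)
Your reduction rests on a claim that is false for this game: ``the game ends precisely when $c=0$\ldots the last move is always a combine, since only combines decrement $c$.'' The paper's own example disproves this. On $n=6$ the game $M_2$ (three moves $\{1\wedge 1\rightarrow 2\}$ followed by $\{2^3\rightarrow 1\wedge 5\}$) ends with a \emph{splitting} move; equivalently, the reachable state $\{2^3\}$ has $c=0$ (its unique completion uses zero combines) yet is not terminal. Reaching $c=0$ is necessary for termination but not sufficient: once $c=0$, all remaining legal moves are splits, and the number of remaining moves equals the current number of terms minus the number of terms in the final decomposition --- a quantity your invariant does not control, and whose last move may fall to the small alliance. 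So your central step, ``the big alliance enters its block with $c\le 3m$ and combines straight down to $c=0$, winning,'' does not win. There is also an availability problem in the other direction: at $\{2^5\}$ (reachable on $n=10$ after five $C_1$ moves) one has $c=1$ but \emph{no} combining move is available, since there are no $1$'s; so players cannot always ``combine when they want, stall when they want,'' which your responsive strategy assumes throughout. Finally, you explicitly defer the reservoir-accounting lemma, which in your framework is where the hypothesis $n\ge 4m^2+8m$ and essentially all the difficulty live; as written, the crux is unproven. (A smaller issue: Theorem~\ref{combining} is stated only for games started from $\{1^n\}$, so well-definedness of $c$ at intermediate states needs its own argument, though the matrix proof does extend.)

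For contrast, the paper's proof is a short strategy-stealing argument and avoids all timing analysis. Assume the small alliance has a winning strategy. The big alliance plays $1+1=2$ on every turn for the first $m$ rounds. Either (Case 1) in some round every small-alliance player answers with the split $2+2+2=1\wedge 5$, in which case the last $m$ players of the big alliance replace their next-round moves by the combine $1+2+2=5$; or (Case 2) every round leaves at least one extra $2$ on the board, so after $m$ rounds there are at least $m$ spare $2$'s, and in round $m+1$ the big alliance plays $2m$ moves $1+1=2$ followed by $m$ splits $2+2+2=1\wedge 5$, of which the middle $m$ players may instead play $1+2+2=5$. In either case the substitution deletes $m$ moves from the continuation, shifting the last mover back exactly $m$ seats --- which necessarily lands inside the $3m$-wide big alliance --- contradicting the assumed winning strategy (determinacy of this finite game then gives the big alliance a winning strategy). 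If you want to pursue your constructive timing approach, you would need to (i) replace ``$c=0$'' by a correct termination criterion accounting for the forced tail of splits, (ii) prove combines/splits are available whenever your strategy calls for them, and (iii) prove the reservoir lemma; each of these is nontrivial and none is needed in the paper's route.
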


Recall that $C_i$ represents the combining move $\{a_{i-1} \wedge i \: a_i \rightarrow a_{i+1}\}$, and when $i=1$, $\{1\wedge1\rightarrow2\}$. We define $S_i$ as the splitting move requiring $(i+1)\: a_i$.

\begin{thm}\label{T9}
In a two-player game, as long as there are 1's remaining in the game state, one player making the first available move of $C_3$, $C_2$, $C_4$, $C_5$, $\ldots$, $C_{k}$\footnote{Where $C_{k}$ is the largest combining move that can be made in a game starting with $n$ $1$'s.}, $C_1$ will be able to force the game to progress without a splitting move being made.
\end{thm}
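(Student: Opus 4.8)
The plan is to read the stated priority rule as a \emph{strategy} for one distinguished player and to track a single invariant that certifies that no splitting move is ever available to anyone. For a game state, let $c_i$ denote the number of copies of $a_i$ present. Since the splitting move $S_i$ requires $(i+1)$ copies of $a_i$, a split is available precisely when $c_i \geq i+1$. Thus the whole theorem reduces to maintaining, after each of the distinguished player's moves, the invariant
\[
c_i \ \leq\ i \qquad \text{for all } i \geq 2,
\]
which makes every splitting move unavailable and therefore forces the opponent to combine as well. I would prove this by induction on the move count, leaning on the earlier finiteness result (every move strictly decreases the total number of terms) to guarantee termination at the unique decomposition.

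For the base case the game opens at $\{a_1^n\}$, where $c_i = 0$ for all $i \geq 2$, so the invariant holds vacuously. For the inductive step I would argue in two half-moves. First, since the invariant holds after the distinguished player's previous move, the opponent has no splitting move and must play some combining move $C_j$; recording its effect on the counts gives $c_{j+1} \mapsto c_{j+1}+1$ while $c_j$ and $c_{j-1}$ decrease (or, for $C_1$, $c_2 \mapsto c_2 + 1$). Hence the only count that can breach its threshold is $c_{j+1}$, which rises to at most $(j+1)+1$. Second, on the distinguished player's turn I would show that the first available move in the priority list $C_3, C_2, C_4, \ldots, C_k, C_1$ both exists and restores the invariant: the player answers the incipient overflow at index $j+1$ by performing $C_{j+1}$, which consumes $j+1$ copies of $a_{j+1}$ and drops $c_{j+1}$ back to a single copy, and at the small indices by the specially ordered moves $C_3, C_2$, deferring the $2$-creating move $C_1$ to last.

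The genuine obstacle is verifying that the prescribed order always offers a legal restoring move and never trades one overflow for another in an uncontrolled way. Two points need care. First, the clearing move $C_{j+1}$ needs a copy of $a_j$, which the opponent's $C_j$ may just have depleted; I would have to show that the invariant together with the structure of reachable states guarantees such a term, or else that the priority rule clears the danger lower down instead. Second, performing $C_{j+1}$ manufactures one copy of $a_{j+2}$ and so threatens to cascade the overflow upward; here I would use that before the opponent's move $c_{j+2}$ sat comfortably below its threshold $j+3$, that only finitely many indices are in play, and that the list deliberately places the most dangerous low-threshold moves $C_3, C_2$ ahead of $C_4, C_5, \ldots$ to keep the fragile small piles cleared. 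Establishing that this ordering prevents two adjacent piles from being simultaneously maxed out — and hence that a single restoring move suffices each turn — is where the bulk of the bookkeeping lies.

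Finally, I would close the loop using the hypothesis ``as long as there are $1$'s remaining'': whenever none of $C_3, C_2, C_4, \ldots, C_k$ applies, the move $C_1$ is still available while $c_1 \geq 2$, so the distinguished player is never stuck and is never forced into a split. Combining this with the invariant — which keeps the opponent split-free on every one of their turns — shows that the entire game proceeds by combining moves alone, which is exactly the conclusion; by Corollary~\ref{shortestGame} this also pins the play to a shortest game.
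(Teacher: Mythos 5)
Your reduction of the theorem to an invariant is a reasonable framing: since $S_i$ is available exactly when $c_i \geq i+1$, it suffices to show the protagonist can keep $c_i \leq i$ for all $i \geq 2$ at the end of each of their turns. But your inductive step is not actually carried out, and the two difficulties you flag are not side bookkeeping --- they \emph{are} the theorem. Concretely, your invariant permits the state $c_j = j$ and $c_{j+1} = j+1$ simultaneously. From that state the opponent plays $C_j$, which (using all $j$ copies of $a_j$ and one $a_{j-1}$) leaves $c_j = 0$ and $c_{j+1} = j+2$, so $S_{j+1}$ is now enabled for the opponent's next turn. Your prescribed restoring move $C_{j+1}$ is illegal, since it needs a copy of $a_j$ and there are none; the only other combining move that lowers $c_{j+1}$ is $C_{j+2}$, which requires $c_{j+2} = j+2$ and is therefore not guaranteed to exist. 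So single-move restoration can fail outright, and the induction does not close unless you prove that such ``two adjacent piles simultaneously maxed'' states are never reachable when the protagonist follows the stated priority order. You name this as ``where the bulk of the bookkeeping lies'' and leave it unproven; that unreachability statement is precisely the content of the result.

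For comparison, the paper does not maintain a pile-size invariant at all. It runs an outer induction on the index $n$ of a hypothetical split $S_n$ (with an inner induction on turns for the base case $n=2$), and for each $n$ gives a reachability argument: to ever assemble $a_n^{n+1}$ one must first pass through $a_n^{n} \wedge a_{n-1}^{n-1} \wedge a_{n-2}$, so $C_n$ becomes available strictly before $S_n$ does; moreover, tracing how the needed $a_{n-2}, a_{n-3}, \ldots$ must have been manufactured shows that at that moment there is at most one copy at each of those lower indices, so the opponent cannot outpace the protagonist, whose priority list ($C_3, C_2$ first, then $C_4, \ldots, C_k$, with $C_1$ last) guarantees they play the clearing move $C_n$ (or $C_2$, $C_3$) in time. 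Some lemma of this cascade/ordering type is unavoidable in your approach as well; without it, your argument establishes only the easy direction (that the invariant, \emph{if} maintainable, forbids splits), not that it can be maintained.
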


\subsection{Multiplayer Games, \texorpdfstring{$p>2$}{p>2}}

To prove Theorem \ref{T6}, we utilize the following property.

\textbf{\emph{Property 1.}}
Suppose player $m$ has a winning strategy $(1\leq m \leq p)$. For any $p\geq 4$ and $n$ significantly large, any winning path of player $m$ does not contain the following $4$ consecutive steps listed below unless player $m$ is the player who takes Step $3$:

Step $1: 1+1=2$ $($combining two $1$'s into one $2)$

Step $2: 1+1=2$

Step $3: 1+1=2$

Step $4: 2+2+2=1+5$ $($splitting three $2$'s into one $1$ and one $5)$.

\begin{proof}
Suppose player $m$ is not the player who takes Step $3$.
Suppose also that player $m$ has a winning strategy and there is a winning path consisting of the four steps listed above. Then the player in Step $3$ can take $1+2+2=5$ instead and keep the rest of the steps after the original Step $4$ exactly the same.

So now player $m-1$ has the winning strategy, which contradicts our assumption that player $m$ has the winning strategy. The property is proved by stealing the ``winning'' strategy.
\end{proof}

We then prove Theorem \ref{T6} with the following two lemmas.

\begin{lem}\label{lem1}
For any $p\geq 5$, $n\geq 14$, no player has a winning strategy.
\end{lem}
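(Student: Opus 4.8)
The plan is to assume, for contradiction, that some player $m$ (with $1 \le m \le p$) has a winning strategy, and then to reach a contradiction by invoking the strategy-stealing result \emph{Property 1}. The crucial structural observation is that a winning strategy only constrains the moves $m$ personally makes; on every other turn the remaining $p-1 \ge 4$ players are free to move however they like, and any complete game in which $m$ follows the strategy is by definition a winning path for $m$. Thus it suffices to construct a single winning path containing the forbidden block (Step 1: $1+1=2$; Step 2: $1+1=2$; Step 3: $1+1=2$; Step 4: $2+2+2=1+5$) in which player $m$ does \emph{not} occupy Step 3, since Property 1 forbids exactly this.

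First I would locate a window of four consecutive turns all belonging to players other than $m$. Because $p \ge 5$, between consecutive turns of $m$ there is a run of $p-1 \ge 4$ opponent turns, so such a window exists at the very start of the game: if $m \ge 5$ take turns $1,2,3,4$, and if $m \le 4$ take turns $m+1,\dots,m+4$. In either case the four window players are distinct from $m$: when $m \ge 5$ they are $1,2,3,4 < m$, and when $m \le 4$ they are $m+1,\dots,m+4 \pmod p$, none equal to $m$ because $p \ge 5 > 4$. Consequently the third $C_1$ of the block, namely Step 3, is made by player $3$ (if $m \ge 5$) or player $m+3$ (if $m \le 4$), neither of which is $m$.

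Next I would check that the opponents can legally realize the block $C_1,C_1,C_1,S_2$ on this window. The first two moves of the whole game are forced to be $1+1=2$, since from $\{1^n\}$ and from $\{1^{n-2}\wedge 2\}$ no other move exists; hence they cooperate automatically whenever they fall inside the window. At most four moves precede the window, and each move lowers the number of $1$'s by at most two (only $C_1$ removes two $1$'s, while every other move removes at most one or adds $1$'s), so at the start of the window at least $n-8 \ge 6$ ones remain. Three $C_1$ moves then consume six ones and create three $2$'s, after which $\{2^3 \to 1 \wedge 5\}$ is available. This is precisely where the hypothesis $n \ge 14$ is needed.

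The resulting sequence of moves is a genuine play in which $m$ follows the winning strategy, because every move $m$ makes is either a forced opening move or lies outside the window; therefore $m$ wins it and it is a winning path. It contains the forbidden block with Step 3 played by a player other than $m$, contradicting Property 1. As $m$ was arbitrary, no player has a winning strategy. I expect the only delicate point to be the bookkeeping that certifies the constructed play is genuinely consistent with $m$'s strategy, so that Property 1 legitimately applies, while simultaneously guaranteeing that enough $1$'s survive into the window; the roles of $p \ge 5$ (which supplies an all-opponent window) and $n \ge 14$ (which makes the block legal) are exactly what let both requirements hold at once.
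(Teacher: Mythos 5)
Your proposal is correct and follows essentially the same route as the paper: assume player $m$ has a winning strategy, use the fact that $p\geq 5$ supplies four consecutive turns by players other than $m$ (the paper takes turns $m+1,\dots,m+4$ immediately after $m$'s first move), have those players execute the forbidden block $C_1, C_1, C_1, \{2^3\to 1\wedge 5\}$, and contradict Property 1. Your extra bookkeeping verifying that at least $n-8\geq 6$ ones survive into the window is a welcome explicit check that the paper leaves implicit.
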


\begin{proof}
Suppose player $m$ has a winning strategy.\\
After player $m$'s first move, the next four players can do the following:

Player $m+1: 1+1=2$

Player $m+2: 1+1=2$

Player $m+3: 1+1=2$

Player $m+4: 2+2+2=1+5$.\\
Since $p\geq 5$, $m+1,\ m+2,\ m+3$ and $m+4$ are not congruent to $m \bmod{p}$, so player $m$ does not make any of the listed moves. These steps contradict Property $1$, thus Lemma \ref{lem1} is proved.
\end{proof}

\begin{lem}\label{lem2}
 For any $p=4$, $n\geq 16$, no player has a winning strategy.
\end{lem}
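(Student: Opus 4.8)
The plan is to mirror the template of Lemma~\ref{lem1}: assume for contradiction that some player $m$ has a winning strategy $\sigma_m$, and then force the game into a configuration that is controlled by Property~1. The obstruction special to $p=4$ is that the four listed steps occupy four consecutive turns, hence (since $p=4$) all four distinct players, so exactly one of Steps 1--4 is taken by player $m$. Unlike the case $p\geq 5$, the opponents can no longer realize the entire block among themselves. I would therefore choose the alignment in which the opponents control Steps 1--3 and player $m$ is scheduled for Step 4: immediately after a turn of player $m$, the three opponents on turns $m+1,m+2,m+3$ each play $C_1$ (the moves $\{1\wedge 1\to 2\}$), producing at least three $2$'s while leaving many $1$'s in the pile. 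The bound $n\geq 16$ is exactly what guarantees that enough $1$'s survive this opening (and its later repetitions) for the forcing to remain legal. Control then returns to player $m$ on turn $m+4\equiv m \pmod 4$, now facing a state containing at least three $2$'s.

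Next I split on player $m$'s move. If $\sigma_m$ calls for the split $S_2=\{2^3\to 1\wedge 5\}$, then the last four moves are precisely Steps 1--4 of Property~1, with Step 3 taken by the opponent on turn $m+3$ and Step 4 by player $m$; since player $m$ is \emph{not} the player taking Step 3, Property~1 is violated, a contradiction. Hence $\sigma_m$ must \emph{refuse} to split, and with only $1$'s and $2$'s present the sole remaining options are $C_1$ (make another $2$) and $C_2=\{1\wedge 2\wedge 2\to 5\}$.

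The refusal case is the main obstacle and is where the real work lies. The length-toggling steal used to prove Property~1 replaces $C_1C_1C_1S_2$ (four moves) by $C_1C_1C_2$ (three moves) reaching the same state; for $p\geq 5$ this whole block sits on opponent turns, but for $p=4$ the terminal $S_2$ always falls on player $m$, who simply declines it, so the opponents cannot execute the toggle themselves. I would handle this by re-forcing: after player $m$'s non-splitting move, the opponents again use their run of three consecutive turns $m+1,m+2,m+3$ to play $C_1C_1C_1$, once more presenting player $m$ (on the following turn $\equiv m$) with at least three $2$'s and thereby recreating a fresh Property~1 block whose Steps 1--3 are opponent moves. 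Each such cycle strictly decreases the number of available $1$'s, so since the game is finite (established earlier), player $m$ cannot refuse forever: either $\sigma_m$ is eventually forced to play $S_2$ on a turn where the preceding three $C_1$'s were opponent moves, yielding the Property~1 contradiction, or the $1$'s are exhausted with at least three $2$'s remaining, at which point the only legal move is $S_2$, the continuation becomes forced, and the parity of the tail is pinned down so that player $m$ cannot be the last mover.

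The delicate points to get right are twofold. First, I must verify that throughout the forcing there are genuinely no legal moves for player $m$ other than $C_1$, $C_2$, $S_2$, so that the case split is exhaustive; this means controlling the creation of larger terms, and in particular checking that if player $m$ escapes via $C_2$ and spawns a $5$, no unwanted $C_3$ is opened. Second, I must carry out the $1$-count showing that $n\geq 16$ leaves enough material to repeat the opening until a contradiction triggers, and that when the $1$'s finally run out the forced $S_2$-tail can be made to end off player $m$'s residue class. These are routine to check but must be done carefully to confirm that $n\geq 16$, rather than the weaker $n\geq 14$ of Lemma~\ref{lem1}, is the correct threshold.
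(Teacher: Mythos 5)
Your opening matches the paper: right after a move by player $m$, the three opponents play $C_1,C_1,C_1$, and if $m$ answers with the split $\{2^3\to 1\wedge 5\}$ this realizes the forbidden block of Property~1 with its Step 3 taken by an opponent, a contradiction. The divergence, and the gap, is in the refusal case. You assert that for $p=4$ ``the opponents cannot execute the toggle themselves'' and therefore fall back on indefinite re-forcing; but that premise is wrong, and the paper's proof consists precisely of the opponents executing the toggle. After player $m$'s non-splitting Step 4, the opponents use their next three consecutive turns as Steps 5--7: $C_1$ (player $m+1$), $C_1$ (player $m+2$), $S_2$ (player $m+3$). The only thing to check is material: Steps 1, 2, 3, 5 create four $2$'s and Step 4 destroys at most two, so at least two $2$'s survive Step 5, hence at least three after Step 6, making the opponents' own $S_2$ at Step 7 legal. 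Then the steal is available to an opponent: player $m+2$ at Step 6 can play $C_2=\{1\wedge 2\wedge 2\to 5\}$ instead of $C_1$, reaching in one move the same state that Steps 6--7 reach in two, which shifts the parity of the entire continuation so that player $m-1$, not $m$, makes the last move --- contradiction. The point you missed is that the toggle needs only two consecutive opponent turns once enough $2$'s are on the table; the three preceding $C_1$'s serve only to supply material, not as part of the pattern that must sit on opponent turns.

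Your substitute for this step does not close. The claims that when the $1$'s are exhausted ``the only legal move is $S_2$,'' that ``the continuation becomes forced,'' and that ``the parity of the tail is pinned down so that player $m$ cannot be the last mover'' are all unproven, and the first two are false: $S_2$ creates a new $1$, re-enabling $C_2$ (and, after two splits, $C_1$), and once $5$'s accumulate the moves $C_3$ and $S_3$ can also open up, so the endgame is full of choices for both sides. Pinning the parity of a variable-length tail is exactly the difficulty that the strategy-stealing device exists to bypass; without it, your iteration terminates in an endgame you cannot control. Note also that ``player $m$ cannot refuse forever'' is not by itself a contradiction --- the game simply ends without $m$ ever splitting --- so the entire weight of your argument rests on the unproved parity assertion.
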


\begin{proof}
Suppose player $m$ has a winning strategy.\\
After player $m$'s first move, the next players can do the following:

Player $m+1: 1+1=2$ $($Step $1)$

Player $m+2: 1+1=2$ $($Step $2)$

Player $m+3: 1+1=2$ $($Step $3)$

Player $m:$ player $m$ can do anything $($Step $4)$

Player $m+1: 1+1=2$ $($Step $5)$

Player $m+2: 1+1=2$ $($Step $6)$

Player $m+3: 2+2+2=1+5$ $($Step $7)$.\\
If player $m$ does $2+2+2=1+5$ in Step $4$, it will violate Property $1$, a contradiction.\\
If player $m$ does anything in Step $4$ other than $2+2+2=1+5$, then Step $4$ will take away at most two $2$'s. Also note that Steps $1, 2, 3, 5$ have generated four $2$'s in total, so there will be at least two $2$'s remaining after Step $5$.\\
Therefore, the player at Step $6$ can take $1+2+2=5$ instead, and now player $m-1$ has the winning strategy.\\
Therefore, by showing that the winning strategy can be stolen, Lemma \ref{lem2} is proved.
\end{proof}

By Lemmas \ref{lem1} and \ref{lem2}, Theorem \ref{T6} is proved. \hfill $\Box$

\begin{proof}[Proof of Theorem \ref{T7}]
Suppose player $2$ has a winning strategy.\\
For any $n\geq 6$, we know that player $1$ and player $2$ both must do $1+1=2$ as their first step. We can let player $3$ also do $1+1=2$ as their first step and we can let player $1$ do $2+2=1+5$ as their second step. Therefore, if player $2$ has a winning strategy, then player $2$ must have a winning strategy for paths starting in this form:

Player $1: 1+1=2$

Player $2: 1+1=2$

Player $3: 1+1=2$

Player $1: 2+2+2=1+5$.\\
This violates Property $1$.
So by contradiction, we have proved that Theorem \ref{T7} is true for any $n\geq 6$.\\
Also, note that when $n=5$, player $3$ always has a winning strategy $($player $1$ and player $2$ both must do $1+1=2$ as their first step, so player $3$ can win the game by doing $1+2+2=5)$.\\
Thus, Theorem \ref{T7} is proved.
\end{proof}

\subsection{A Strategic Alliance}

\begin{proof}[Proof of Theorem \ref{T8}]
Suppose that the small alliance has a winning strategy. We define the first round starting from the big alliance's first move. For the first $m$ rounds, let all the players from the big alliance (consisting of $3m$ consecutive players) do $1+1=2$. \\
\noindent Case $1$: If in one of the first $m$ rounds, every player from the small alliance (consisting of $m$ consecutive players) does $2+2+2=1+5$ in this round, then after these $m$ moves, the last $m$ consecutive players of the big alliance can all do $1+2+2=5$ in the next round.

Suppose the small alliance has a winning strategy, then for any winning path, there will be a player $q$ from the small alliance who takes the last step. By using the stealing strategy mentioned previously (last $m$ consecutive in the big alliance do $1+2+2=5$ instead), player $q-m$ now becomes the player who takes the last step. Note that player $q-m$ belongs to the big alliance, so the big alliance now has the winning strategy, which leads to a contradiction. \\

\noindent Case $2$: If for each of the first $m$ rounds, at least one player in the small alliance does not do $2+2+2=1+5$, then there will be at least one $2$ generated in each round. This is because the player who does not do $2+2+2=1+5$ can only take away at most two $2$'s in that step, the small alliance can take away at most $(3m-1)$ $2$'s in each round, and the big alliance generates $3m$ $2$'s in each round. Therefore, each round can generate at least one $2$.

Thus after $m$ rounds, there will be at least $m$ $2$'s generated. In the $(m+1)$th round, the big alliance can perform $2m$ consecutive $1+1=2$ moves followed by $m$ consecutive $2+2+2=1+5$ moves. Note that in this round, the middle $m$ consecutive players of the big alliance can instead do $1+2+2=5$.

Suppose the small alliance has a winning strategy, so there is a player $q$ from the small alliance who takes the last step. By the stealing strategy mentioned above, player $q-m$ now takes the last step. Since player $q-m$ belongs to the big alliance, the big alliance now has the winning strategy, a contradiction.

Thus by Cases $1$ and $2$, Theorem \ref{T8} is proved.
\end{proof}

\subsection{A Game Without Splitting Moves}

\begin{proof}[{Proof of Theorem \ref{T9}}]
Recall the conditions of the theorem: one player, who we will henceforth call the protagonist, must be using the strategy of making, on each turn, the first move that is available of  $C_3$, $C_2$, $C_4$, $C_5$, $\ldots$, $C_{k}$, $C_1$. We first prove that no splitting move will be playable on the antagonist's turn if our protagonist is using this strategy. We do this by induction on the size of the index $n$ where the splitting move $S_n$ is being made.

Our base case is then $n=2$. Here we induct on the protagonist's turns. After the protagonist's first turn, there can be no splitting moves, since it can be at most the second turn and there can be at most two $2$'s. Now we assume the inductive hypothesis: after $i$ turns for the protagonist, we have at most two $2$'s. As long as we have not played enough of the game to make $S_3$ or $S_4$ \footnote{This must true the first time $C_2$ is available, as we must make either $C_2$ or $S_2$ to get $a_3$. We will go on to prove that without larger splitting moves, $S_2$ is impossible. Then we will prove that without $S_2$, larger splitting moves are impossible. Thus, neither $S_2$ nor larger splitting moves will be possible.}, the antagonist can only play $C_1$ to increase the number of $2$'s. If we now have two or three 2's, since there is a $1$ available the protagonist will play either $C_2$ or $C_3$. After either of these moves, there are once again two or fewer $2$'s remaining. If not, the protagonist can make any move and there will still be two or fewer 2's remaining. Thus, by induction, $S_2$ can never be played.

Now, as long as $S_4$ and $S_5$ are not played, we will prove that neither player will play $S_3$. Given this, the only way to increase the number of $5$'s is to play $C_2$. Thus to get to four $5$'s, we must first have two $2$'s and three $5$'s, and either it is the protagonist's turn at this point and they will play $C_3$ or on the previous turn there must have been at least one $2$ and three $5$'s, from which point the protagonist would play $C_3$.

For $n>3$, the inductive hypothesis states that no one will play $S_j$ for $j<n$: we will prove that $S_n$ will not be played as long as $S_m$ cannot be played for $m>n$. Thus the only way to increase the value of $a_n$ is to play $C_{n-1}$. To get to $a_n^{n+1}$, we must first have $a_n^{n}\wedge a_{n-1}^{n-1}\wedge a_{n-2}$. Thus $C_{n}$ must be available before $S_n$. Let us consider when $C_n$ first becomes available. We must either make $C_{n-1}$ or $C_{n-2}$ to get to this point: for $C_{n-1}$ to be made there must have been $a_{n-1}^{n}$ and therefore $C_{n-2}$ must have been made once in the two preceding turns, otherwise $S_{n-1}$ would have been available to the antagonist. Thus for $S_{n-2}$ to never have been available to the antagonist, there must be at most one $a_{n-2}$ once $C_n$ becomes available.

Now for $S_n$ to be playable, the players must first make more of $a_{n-2}$. This requires playing $C_{n-3}$, which then means we have at most one $a_{n-3}$, since otherwise the antagonist must have had the opportunity to play $S_{n-3}$. Then, to increase the value of $a_{n-3}$ we must play $C_{n-4}$ and have at most one $a_{n-4}$. It continues, descending, that for successive $2< p <n-4$ directly after someone has played $C_{p}$ there is at most one $a_p$ and at most two $a_{p+1}$'s, until finally, $C_2$ is played and we have at most one $2$ and at most two $5$'s. If it's the protagonist's turn, they will play $C_n$, eliminating the possibility of $S_n$. If it's the antagonist's turn, only $C_1$ and $C_n$ can be made, so $C_1$ is their only useful move. Now the protagonist makes $C_2$ if it is available and $C_n$ if it is not. The antagonist, not wanting to play $C_n$, will play $C_1$ again. If the protagonist can play $C_3$, they do so, and if not, they can play $C_n$. The antagonist will play $C_1$, and the protagonist will play $C_n$. Thus, the protagonist has successfully prevented $S_n$ from being played. So by induction, the protagonist can force the game to progress without a splitting move.
\end{proof}

\begin{conj}
Using the following strategy, either player can force the game to be played to completion without a splitting move when the game is played on $n=a_i$, for $a_i$ a term in the sequence.
\end{conj}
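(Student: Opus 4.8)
The plan is to prove the conjecture by strong induction on the index $i$, leveraging the especially clean structure of the game when $n = a_i$: the unique legal decomposition of $a_i$ is the single term $1\cdot a_i$, so the terminal state is unambiguously $\{a_i\}$, and by Lemma~\ref{combineGameExists} together with Corollary~\ref{shortestGame} a combine-only play reaches it in exactly $MC(a_i)$ moves, following the recursive pattern: build up $(i-1)$ copies of $a_{i-1}$ and one copy of $a_{i-2}$, then apply $C_{i-1}$. I would first check the base cases $i=1,2,3$ directly (the games on $1$, $2$, $5$ use $0$, $1$, $3$ moves respectively and admit no split), and then assume that for all $j<i$ either player can force a split-free completion of the game on $a_j$.

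The substantive content beyond Theorem~\ref{T9} is twofold. First, Theorem~\ref{T9} guarantees only that no splitting move is available to the antagonist \emph{while $1$'s remain}; for $n=a_i$ one must extend this through the endgame, after the $1$'s are exhausted, when the remaining moves combine higher terms (for instance the final $C_{i-1}$, which needs no $1$). Second, one must upgrade ``progress without a split'' to ``completion without a split'': the greedy player must never be stranded in a non-terminal state whose only legal moves are splits. I would isolate a state invariant, directly adapting the descending bound in the proof of Theorem~\ref{T9}, namely that immediately after the greedy player performs $C_p$ there is at most one $a_p$ and at most two $a_{p+1}$ present, and show it persists into the endgame, so that a combining move of the greedy order $C_3, C_2, C_4, \ldots, C_k, C_1$ is always available until the state is exactly $\{a_i\}$.

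For the ``either player'' clause I would observe that the greedy strategy of Theorem~\ref{T9} is a function of the current game state alone and makes no reference to turn parity or to which player moves first; hence whichever of the two players adopts it plays the role of protagonist, and the entire argument is symmetric under interchanging the two players. The antagonist's only genuinely disruptive option is to flood lower levels via $C_1$ (and inductively via $C_p$) in an attempt to accumulate $i+1$ copies of some $a_p$ and thereby enable $S_p$; the invariant above is exactly what rules this out, since the greedy player consumes those copies through $C_{p+1}$ or $C_p$ before the count can reach $p+1$.

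The main obstacle is precisely this endgame-and-termination step: bridging from the ``no split available to the antagonist while $1$'s remain'' of Theorem~\ref{T9} to a split-free play that runs all the way to $\{a_i\}$ for either player. To control termination I would combine the state invariant with the strictly decreasing term-count monovariant used to prove finiteness of the game, arguing that the only terminal position reachable under the greedy strategy is the unique decomposition $1\cdot a_i$, so the monovariant forces the play to reach it, and that at no intermediate position is the greedy player reduced to splitting. Making the invariant robust against all adversarial antagonist play, rather than against the single cooperative line implicit in Lemma~\ref{combineGameExists}, is where the real work lies.
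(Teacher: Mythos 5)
First, note that the statement you are trying to prove is stated in the paper as a \emph{conjecture}, and the paper offers no proof of it: immediately after stating it, the authors write that they ``have not yet proven that there is no other way for the game to progress.'' So there is no paper proof to compare against, and the question is whether your proposal closes the conjecture on its own. It does not. What you have written is a plan whose hard steps are named but deferred: you yourself locate ``the real work'' in (a) extending the protection of Theorem~\ref{T9} past the point where the $1$'s are exhausted, (b) upgrading ``no split is ever available to the antagonist'' to ``the game runs to completion at $\{a_i\}$ with no split ever played by anyone,'' and (c) making your invariant (at most one $a_p$ and at most two $a_{p+1}$ immediately after the greedy player plays $C_p$) hold against arbitrary adversarial play rather than along the cooperative line of Lemma~\ref{combineGameExists}. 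None of these steps is carried out, and each is exactly where the known argument stops: Theorem~\ref{T9} is proved by a descending chain of conditional claims (``$S_n$ cannot occur provided no $S_m$ with $m>n$ has occurred''), valid only while $1$'s remain, and for $n=a_i$ the endgame necessarily passes through long stretches with no $1$'s at all (for instance the position $a_{i-1}^{i-1}\wedge a_{i-2}$ preceding the final $C_{i-1}$, once $i>3$). Your appeal to the term-count monovariant also does not do what you want: it guarantees the game ends, not that it ends without the greedy player being stranded on their own turn at a non-terminal state whose only legal moves are splits; ruling that out is precisely the content of the conjecture.

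A second, more structural concern: the invariant you propose is extracted from the proof of Theorem~\ref{T9}, where it is derived under the running hypothesis that larger splitting moves have not yet become available, and it is used there only to constrain the antagonist's options. To prove the conjecture you need a genuinely stronger statement --- a state invariant, preserved by every antagonist move and every greedy move, from which both (i) availability of some combining move for the greedy player at every non-terminal state and (ii) unavailability of every splitting move at the antagonist's turns follow, all the way down to the terminal state $\{a_i\}$. Formulating and verifying such an invariant (in particular, checking that it survives the moment the last $1$ disappears, and that it withstands an antagonist who accumulates copies at several different indices simultaneously) is the open problem. Your proposal correctly identifies this obstacle but leaves it open, so the conjecture remains unproved.
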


The fact that each term in the sequence conforms to the equality $a_i = a_{i-1}^{i-1} \wedge a_{i-3}^{i-3} \wedge \ldots$ suggests that this is true, as we ought to have $1$'s until we get to a game state that looks like the right side of the equation, after which point only combining moves will be available. However, we have not yet proven that there is no other way for the game to progress.

\section{Future Work}

There are several unanswered questions that may interest other researchers.
\begin{itemize}
\item Can we determine the distribution of gaps of any size?
\item Can we prove Gaussianity?
\item Are there other nonlinear recurrences that have unique decompositions? Do we have similar results for the distribution of gaps and number of summands?
\item For the generalized Zeckendorf game, through simulations we were able to tighten the bound on a Combine Only game to about $0.6601\:n$. What does $MC(a_i)$ converge to as $i\rightarrow\infty$?
\item While we have results about winning strategies or the lack of them pertaining to game with $p\geq3$, showing the existence of winning strategies for the two-player game remains unsolved. One possible strategy we have been exploring is the Combine Only game as a winning strategy.
\item Can an upper bound be found on the number of moves in a general game (without specific restrictions on moves)?
\item What other positive nonlinear recurrence sequences can the game be extended to?
\end{itemize}

\ \\

\end{document}